\documentclass[12pt,a4paper]{amsart}
\pdfoutput=1

\def\thispapertitle{Discrete multiple orthogonal polynomials on~shifted lattices}

\usepackage{array}
\usepackage{arydshln}
\usepackage[%russian,
british]{babel}
\usepackage[utf8]{inputenc}
\usepackage[%T2A,
T1]{fontenc}
\usepackage[margin=2cm]{geometry}
\usepackage{graphicx}
\DeclareGraphicsExtensions{.pdf}
\usepackage[selected=true,shrink=10,stretch=40,verbose=true,tracking=alltext,letterspace=0
]{microtype}
\usepackage{amssymb}
\usepackage{paralist}

\usepackage[colorlinks=false,%linkcolor=mybrown,
pdfborderstyle={/S/D/D[2 1]/W .5},
%citecolor=DarkGreen,
linktocpage=true,unicode=true,pdftex, pdfversion=1.5, pdftitle={\thispapertitle},
pdfauthor={Alexander Dyachenko, Vladimir Lysov},
pdfsubject={MSC(2010) 33C45 (42C05)},
pdfkeywords={Classical orthogonal polynomials of descrete variable, Multiple orthogonality,
    Pearson equation, Rodrigues formula}, pdfpagemode=UseOutlines,pdflang=en-GB,
% pdfa=true
]{hyperref}

\parskip=1pt plus 1pt minus .5pt

\makeatletter
\renewcommand\subsubsection{\@startsection{subsubsection}{3}%
    \z@{.5\linespacing\@plus.7\linespacing}{-.5em}%
    {\normalfont\bfseries}}%
\renewcommand\@settitle{\begin{center}%
        \baselineskip14\p@\relax%
        \Large\scshape%
        \uppercasenonmath\@title%
        \@title%
    \end{center}}
\makeatother
\setcounter{tocdepth}{3}

\newtheorem{theorem}{Theorem}
\newtheorem{lemma}[theorem]{Lemma}
\newtheorem{prop}[theorem]{Proposition}

\newcommand{\coloneqq}{\mathrel{\mathop:}=}

\newcommand{\supp}{\operatorname{supp}}
\renewcommand{\le}{\leqslant}
\renewcommand{\ge}{\geqslant}

\begin{document}

\title{\thispapertitle}

\author{Alexander Dyachenko}
\thanks{The first author is supported by the research fellowship DY 133/1-1 of the German
    Research Foundation~(DFG)}
\address[A.~Dyachenko]{UCL Department of Mathematics, Gower St, London WC1E 6BT, UK}
\email[A.~Dyachenko]{a.dyachenko@ucl.ac.uk, diachenko@sfedu.ru}

\author{Vladimir Lysov}
\thanks{The second author is supported by Russian Science Foundation, grant No. 19-71-30004}
\address[V.~Lysov]{Keldysh Institute of Applied Mathematics (RAS), Miusskaya sq. 4, 125047
    Moscow, Russia}
\address[V.~Lysov]{ Moscow Institute of Physics and Technology (State University), Dolgoprudny,
    Moscow region, Russia} \email[V.~Lysov]{v.g.lysov@gmail.com}

\date{\today} \subjclass[2010]{33C45 (42C05)}

\begin{abstract}
    We introduce a new class of polynomials of multiple orthogonality with respect to the
    product of~$r$ classical discrete weights on integer lattices with noninteger shifts. We
    give explicit representations in the form of the Rodrigues formulas. The case of two weights
    is described in detail.
\end{abstract}

\maketitle   
\section{Introduction}
Orthogonal polynomials of discrete variable have numerous applications. One the most well-known
applications is related to the theory of representations of the 3D rotation group~\cite{GMS}: it
is based on a deep connection between polynomials orthogonal with respect to classical discrete
and continuous measures. The similarity between these polynomials is nicely exposed in the
monograph~\cite{NSU}.

The current paper focuses on multiple orthogonal polynomials induced by a system of discrete
measures that generalize the classical ones. Our study makes use of the analogy with the
continuous case. For that reason, we first remind some properties of classical polynomials
orthogonal with respect to continuous measures.

\subsection{Classical orthogonal polynomials}

Classical orthogonal polynomials are the polynomials by Hermite, Laguerre, and Jacobi (the last
also include polynomials by Chebyshev and Gegenbauer as special cases). They constitute a simple
yet quite important class of special functions. These polynomials arise in a wide range of
problems in quantum mechanics, mathematical physics, numerical analysis and approximation
theory. Their classification and derivation of their main properties base upon the Pearson
equation for the weight of orthogonality. According to the definition in~\cite{NikUv}, classical
orthogonal polynomials are the polynomials~$P_n$ of degree~$n$ that satisfy the orthogonality
relations with respect to the continuous weight~$\rho$ on the interval~$S_\rho\subset\mathbb R$:
\begin{equation}\label{eq:orthog_cont}
    \int_{S_\rho} P_n(x) x^k \rho(x)\, dx = 0,
    \quad
    k=0,1,\dots,n-1,
\end{equation}
where~$\rho$ satisfies the Pearson differential equation:
\begin{equation}\label{eq:Pearson_cont}
\frac{d}{dx} \big(\sigma (x) \rho(x)\big) = \tau(x)\rho(x)
\end{equation}
with polynomials~$\sigma,\tau$ of degrees~$\deg \sigma\le 2$ and~$\deg\tau=1$. Up to shifts and
rescaling, there are precisely three%
\footnote{Some authors additionally include in this list the Bessel polynomials which are
    orthogonal on the cardioid in~$\mathbb{C}$ with respect to~$e^{-\frac 1x}$. For our goal, it
    is enough only to consider orthogonality on subsets of the real line~$\mathbb{R}$.} %
types of classical orthogonal polynomials, see Table~\ref{tab:classical_OP}.

\begin{table}%[b!]
    \setlength{\extrarowheight}{10pt}
    \centering
    \caption{Classical orthogonal polynomials}
    \label{tab:classical_OP}
    \begin{tabular}{|c|c;{.4pt/1pt}c;{.4pt/1pt}c|}
      \hline
      $P_n$
        & $H_n$, Hermite
          & $L^\alpha_n$, Laguerre
            & $P^{\alpha,\beta}_n$, Jacobi
      \\[3pt] \hline
      $\rho(x)$
        & $\displaystyle e^{-x^2}$
          & $\displaystyle x^\alpha e^{-x}$
            & $\displaystyle (1+x)^\alpha (1-x)^\beta$
      \\[3pt] \hdashline[.4pt/1pt]
      $\displaystyle S_\rho$
        & $(-\infty,\infty)$
          & $(0,\infty)$
            & $(-1,1)$
      \\[3pt] \hdashline[.4pt/1pt]
      $\displaystyle \sigma(x)$
        & $1$
          & $x$
            & $x^2-1$
      \\[3pt] \hdashline[.4pt/1pt]
      $\displaystyle \tau(x)$
        & $-2x$
          & $-x+\alpha+1$
            & $(\alpha+\beta+2)x-\alpha+\beta$
      \\[3pt] \hdashline[.4pt/1pt]
      C1
        & ---
          & $\alpha>-1$
            & $\alpha>-1$, $\beta>-1$
      \\[3pt] \hline
    \end{tabular}
\end{table}

The polynomials~$P_n$ may be written explicitly through the Rodrigues formula:
\begin{equation}\label{eq:Rodrigues_cont}
    P_n(x) \rho(x) = C_n \frac{d^n}{dx^n} \big( \sigma^n (x) \rho(x) \big)
    ,
\end{equation}
where~$C_n\ne 0$ stands for a normalization constant. Condition~C1 in
Table~\ref{tab:classical_OP} yields positivity and integrability of the weight~$\rho$. All zeros
of~$P_n$ are simple and belong to~$S_\rho$.

\subsection{Multiple orthogonal polynomials}

Some applications related to rational approximations, random matrices, Diophantine
approximations give rise to multiple orthogonal polynomials, see
e.g.~\cite{AptKui11,Kuijlaars10,MukhVar07,VanAsshe99,Sor2002}. These polynomials are determined
by orthogonality relations with respect to more than one weight.

Let~$\rho_1,\dots,\rho_r$ be the weight functions determined on the
intervals~$S_{\rho_1},\dots,S_{\rho_r}$ of the real line. A polynomial~$P_n$ is called a (type
II) multiple orthogonal polynomial if~$P_n\not\equiv 0$,~$\deg P_n\le rn$ and the orthogonality
relations:
\begin{equation}\label{eq:mult_orthog_cont}
    \int_{S_{\rho_j}} P_n(x) x^k\rho_j(x)\,dx =0,
    \quad
    k=0,\dots,n-1,
    \quad
    j=1,\dots,r
\end{equation}
are satisfied. The case~$r=1$ corresponds to the standard orthogonality~\eqref{eq:orthog_cont}.
Two ways to determine classical multiple orthogonal polynomials are currently in use. The first
one~\cite{ABV} is when the intervals~$S_{\rho_j}$ coincide, i.e.~$S_{\rho_j}=\cdots=S_{\rho_j}$,
and the weights~$\rho_j$ satisfy the Pearson equation~\eqref{eq:Pearson_cont} with
distinct~$\tau^{(j)}$. In this case, the restrictions for the degrees of~$\sigma$ and~$\tau^{(j)}$
remain the same: $\deg\sigma\le 2$ and~$\deg\tau^{(j)}=1$

\begin{table}[b]
    \setlength{\extrarowheight}{10pt}
    \centering
    \caption{Classical multiple orthogonal polynomials}
    \label{tab:cont_MOP}
    \begin{tabular}{|c|c;{.4pt/1pt}c;{.4pt/1pt}c|}
      \hline
      $P_n$
        & Laguerre-Hermite
          & Jacobi-Laguerre
            & Angelesco-Jacobi
      \\[3pt] \hline
      $\rho(x)$
        & $\displaystyle x^\beta e^{-x^2}$
          & $\displaystyle (a+x)^\alpha x^\beta e^{-x}$
            & $\displaystyle (a+x)^\alpha x^\beta (1-x)^\gamma$
      \\[3pt] \hdashline[.4pt/1pt]
      $\rho_j(x)$
        & $\displaystyle |x|^\beta e^{-x^2}$
          & $\displaystyle (a+x)^\alpha |x|^\beta e^{-x}$
            & $\displaystyle (a+x)^\alpha |x|^\beta (1-x)^\gamma$
      \\[3pt] \hdashline[.4pt/1pt]
      $\displaystyle S_{\rho_1}, S_{\rho_2}$
        & $(-\infty,0)$, $(0,\infty)$
          & $(-a,0)$, $(0,\infty)$
            & $(-a,0)$, $(0,1)$
      \\[3pt] \hdashline[.4pt/1pt]
      $\displaystyle \sigma(x)$
        & $x$
          & $(x+a)x$
            & $(x+a)x(x-1)$
      \\[3pt] \hdashline[.4pt/1pt]
      $\displaystyle \tau(x)$
        & $-2x^2+\beta+1$
          &
            \begin{tabular}{l}
              $-x^2$
              \\[-5pt]
              $+(\alpha+\beta+2-a)x$
              \\[-5pt]
              $+(\beta+1)a$
            \end{tabular}
            &
              \begin{tabular}{l}
                $(\alpha+\beta+\gamma+3)x^2$
                \\[-5pt]
                ${}+\big((\beta+\gamma+2)a$
                \\[-5pt]
                \hspace{1em}${}-(\alpha+\beta+2)\big)x-(\beta+1)a$
                \\[5pt]
              \end{tabular}
      \\[3pt] \hdashline[.4pt/1pt]
      MC1
        & $\beta>-1$
          &
            \begin{tabular}{l}
              $\alpha>-1$, $\beta>-1$,
              \\[-5pt]
              $a>0$
            \end{tabular}
          &
            \begin{tabular}{l}
              $\alpha>-1$, $\beta>-1$,
              \\[-5pt]
              $\gamma>-1$, $a>0$
            \end{tabular}
      \\[3pt]\hline
    \end{tabular}
\end{table}
There is also another way~\cite{Aptekarev98,AMR,VC}, which we consider in the particular
case~$r=2$. Here, the weight functions~$\rho_1,\rho_2$ are proportional to restrictions of a
certain analytic function~$\rho$ to disjoint intervals~$S_{\rho_1},S_{\rho_2}$, that
is:~$\rho_j\coloneqq c_j\rho|_{S_{\rho_j}}$ for a constant~$c_j$. The function~$\rho$ is
additionally assumed to satisfy the Pearson equation~\eqref{eq:Pearson_cont} with the
polynomials~$\sigma$ and~$\tau$ of degrees~$\deg \sigma\le 3$ and~$\tau=2$. This construction
yields the Angelesco-Jacobi~\cite{Angelesco,Kaljagin}, Jacobi-Laguerre~\cite{Sor1984} and
Laguerre-Hermite~\cite{Sor1986} polynomials. The basic characteristics of these polynomials are
stated in Table~\ref{tab:cont_MOP}. Condition~MC1 (Multiple Continuous) provides positivity and
integrability of the weights, as well as that~$S_{\rho_1} \cap S_{\rho_2} = \varnothing$. The
orthogonality~\eqref{eq:mult_orthog_cont} immediately implies that the polynomial~$P_n$ has~$n$
simple zeros in~$S_{\rho_j}$. One of the most remarkable properties of this construction is that
the resulting polynomials also satisfy the Rodrigues formula~\eqref{eq:Rodrigues_cont}.

\subsection{Classical orthogonal polynomials of discrete variable}

Up until this point we discussed the continuous case. Now, let us consider orthogonality with
respect to discrete measures. Let~$S_\rho$ be a discrete subset of~$\mathbb{R}$,
e.g.~$S_\rho=\mathbb{Z}_+\coloneqq \{0,1,\dots\}$ or~$S_\rho=\{0,1,\dots, N\}$ for some
integer~$N\ge 0$. The function~$\rho$ is supposed to be nonnegative on~$S_\rho$. Put the
charge~$\rho(x)$ in each point~$x\in S_\rho$ and consider the discrete measure:
\begin{equation}\label{eq:discrete_measure_def}
    \mu(y) = \sum_{x\in S_\rho} \rho(x) \delta(y-x),
\end{equation}
where~$\delta$ is the Dirac delta. Then the function~$\rho$ is called the discrete weight of the
measure~$\mu$.

Consider a sequence of polynomials~$P_n$ of degree~$n$, which are orthogonal with respect to the
measure~$\mu$:
\begin{equation}\label{eq:orthog_discr}
    \int P_n(x) x^k d\mu(x) = \sum_{x\in S_\rho} P_n(x)x^k\rho(x)=0,
    \quad
    k=0,1,\dots,n-1.
\end{equation}
Denote by~$\Delta f$ and~$\nabla f$ the left and the right finite differences of~$f$,
respectively:
\[
    \Delta f(x)=f(x+1)-f(x)
    \quad\text{and}\quad
    \nabla f(x)=f(x)-f(x-1)
    .
\]
As above, given two polynomials~$\sigma$ and~$\tau$ such that~$\deg\sigma\le 2$ and~$\deg\tau=1$
one may also consider the (difference) Pearson equation:
\begin{equation}\label{eq:discr_Pearson}
    \Delta \big( \sigma(x) \rho(x) \big)
    = \tau(x) \rho(x)
    .
\end{equation}
Up to trivial transformations, its regular solutions determine precisely four (see~\cite{NSU})
classes of classical orthogonal polynomials of discrete variable: the Charlier and Mexner
polynomials are orthogonal on~$\mathbb{Z}_+$, while the Kravchuk (Krawtchouk) and Hahn
polynomials are orthogonal on~$\{0,1,\dots,N\}$. Polynomials of these classes satisfy the
Rodrigues formula:
\begin{equation}\label{eq:discr_Rodrigues}
    P_n(x) \rho (x) = C_n \nabla^n\rho_n(x),
\end{equation}
where~$C_n$ is a normalizing constant and~$\rho_n$ is a shifted weight, see
Table~\ref{tab:Dclassical}. In this table, the Hahn weight is expressed via the so-called
Pochhammer symbol:
\[
    (\alpha)_x
    \coloneqq \frac{\Gamma(x+\alpha)}{\Gamma(\alpha)}
\]
and, in particular, $(\alpha)_x=(\alpha)(\alpha+1)\cdots(\alpha+x-1)$ for~$x\in\mathbb{Z}_+$.
The Kravchuk and Hahn polynomials additionally require that~$n\le N$.

\begin{table}[h]
    \setlength{\extrarowheight}{10pt}
    \centering
    \caption{Classical orthogonal polynomials of a discrete variable}
    \label{tab:Dclassical}
    \begin{tabular}{|c|c;{.4pt/1pt}c;{.4pt/1pt}c;{.4pt/1pt}c|}
      \hline
      $P_n$
      & Charlier
      & Meixner
      & Kravchuk
      & Hahn\\[3pt]
      \hline
      $\rho(x)$
      & $\displaystyle\frac{b^x}{\Gamma(x+1)}$
      & $\displaystyle\frac{b^x\Gamma(x+\alpha)}{\Gamma(x+1)}$
      & \!\!$\frac{\displaystyle b^x}{\Gamma(x+1)\Gamma(N-x+1)}$\!\!
      & $\displaystyle
        \frac{(\alpha)_x(\beta)_{N-x}}
        {\Gamma(x+1)\Gamma(N-x+1)}$\\[10pt]
      \hdashline[.4pt/1pt]
      $S_\rho$
      & $\mathbb{Z}_+$
      & $\mathbb{Z}_+$
      & $\{0,1,\dots,N\}$
      & $\{0,1,\dots,N\}$\\[3pt]
      \hdashline[.4pt/1pt]
      $\rho_m(x)$
      & $\displaystyle\frac{b^x}{\Gamma(x+1)}$
      & \!$\displaystyle\frac{b^x\Gamma(x+\alpha+m)}{\Gamma(x+1)}$\!
      & \!$\frac{\displaystyle b^x}{\Gamma(x+1)\Gamma(N-m-x+1)}$\!
      & \!$\displaystyle
        \frac{(\alpha+m)_x(\beta+m)_{N-m-x}}{\Gamma(x+1)\Gamma(N -m-x+1)}$\!\\[10pt]
      \hdashline[.4pt/1pt]
      $\displaystyle\frac{\rho_{m+1}(x)}{\rho_m(x)}$
      & $1$
      & $x+\alpha+m$
      & $N-m-x$
      & $\displaystyle\frac{(x+\alpha+m)(N -m-x)}{(\alpha+m)(\beta+m)}$\!\!\\[10pt]
      \hdashline[.4pt/1pt]
      \!$\displaystyle \frac{\rho_{m+1}(x-1)}{\rho_m(x)}$\!
      & $\displaystyle \frac xb$
      & $\displaystyle \frac xb$
      & $\displaystyle \frac xb$
      & $\displaystyle\frac{x(N-x+\beta)}{(\alpha+m)(\beta+m)}$\\[10pt]
      \hdashline[.4pt/1pt]
      D1
      & $b>0$
      & $\alpha\!>\!0,\ 0\!<\!b\!<\!1$
      & $b>0$
      & \!\!\!\!$\alpha,\beta>0$ \ or \ $\alpha,\beta<1-N$\!\!\!\!\!\!\\[3pt]
      \hdashline[.4pt/1pt]
      D2
      & $b\ne 0$
      & \begin{tabular}{c}
          \\[-2.5em]
          $0<|b|<1$,\\[-7pt]
          $\alpha\notin-S_\rho$\\[2pt]
        \end{tabular}
      & $b\ne 0$, $b\ne -1$
      & \!\begin{tabular}{c}
          \\[-2.5em]
          $\alpha,\beta\notin\{0,-1,\dots,1-N\}$,\\[-7pt]
          $-\alpha-\beta\notin\{0,1,\dots,2N-2\}$\!\\[2pt]
        \end{tabular}\!\!\!
      \\[3pt]
      \hline
    \end{tabular}
    % \\[5pt]
\end{table}

Condition~D1 in Table~\ref{tab:Dclassical} yields positivity%
\footnote{For the Hahn case, the inequality~$\alpha,\beta<N-1$ in Condition~D1 induces that the
    weight keeps the same sign~$(-1)^{N}$ on the whole support~$S_\rho$. In general, our
    Table~\ref{tab:Dclassical} and Condition~D1 may be compared to~\cite{DLMF}.}
and finiteness of all moments for the measure~$\mu$ from~\eqref{eq:discrete_measure_def}. The
orthogonality relations immediately imply that all zeros of~$P_n$ are simple and lie in the
convex hull of~$S_\rho$. Between two consecutive lattice points there is at most one zero
of~$P_n$.

\subsection{Polynomials of multiple discrete orthogonality}

The paper~\cite{ACV} implements the first of the schemes described above for constructing
classical multiple orthogonal polynomials (``different weights on the same support''). Analogous
to the continuous case,~\cite{ACV} deals with discrete measures with a common support, that
satisfy the difference Pearson equation~\eqref{eq:discr_Pearson} with polynomials~$\sigma$
and~$\tau^{(j)}$ of degrees~$\deg\sigma\le 2$ and~$\deg\tau^{(j)}=1$.

To implement the second scheme (``one weight on different supports'') in the discrete case, one
needs to study the lattices that are shifted by noninteger numbers. This observation seems to be
first made by Sorokin in~\cite{Sor2010}, where he studied various generalizations of the Meixner
polynomials. Our paper aims at implementing this approach in a more general setting. We also
highlight the recent publications~\cite{DomMar14,FilVanAs18,SmVanAs12} devoted to orthogonal
polynomials (i.e. $r=1$) with respect to products of classical weights on a union of lattices
with shifts.

We begin with the following observation: the Rodrigues formula~\eqref{eq:discr_Rodrigues}
determines an orthogonal polynomial~$P_n$ of degree~$n$ under rather general conditions when the
parameters~$\alpha,\beta, b$ have complex values.

\begin{prop}\label{th:2}
    In Table~\ref{tab:Dclassical}, if the parameters of the function~$\rho$ satisfy
    Condition~D2, then the polynomial~$P_n$ determined by~\eqref{eq:discr_Rodrigues} is of
    degree~$n$ and satisfies the orthogonality conditions~\eqref{eq:orthog_discr} with respect
    to (generally speaking, complex-valued) measure~$\mu$ from~\eqref{eq:discrete_measure_def}.
\end{prop}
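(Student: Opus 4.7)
The proof splits into three parts: verifying that the right-hand side of~\eqref{eq:discr_Rodrigues} divided by~$\rho$ is a polynomial of degree exactly~$n$; deriving orthogonality by discrete summation by parts; and checking that the boundary terms vanish and every sum converges absolutely. For the first part I would argue by induction on~$m$ that $\nabla^m\rho_m(x)/\rho(x)$ is a polynomial in~$x$ of degree exactly~$m$. The identity
\[
\nabla\rho_{m+1}(x)=\rho_m(x)\left[\frac{\rho_{m+1}(x)}{\rho_m(x)}-\frac{\rho_{m+1}(x-1)}{\rho_m(x)}\right]
\]
expresses the action of~$\nabla$ as multiplication of~$\rho_m$ by the polynomial in the brackets, both of whose terms are read off from Table~\ref{tab:Dclassical}. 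Iterating $n$ times gives $C_n^{-1}\nabla^n\rho_n(x)=\rho(x)P_n(x)$, and the leading coefficient of~$P_n$ is a product over the $n$ steps of the leading coefficients of those brackets; Condition~D2 is precisely what keeps this product nonzero (for Meixner, $|b|<1$ forces $1-1/b\ne 0$; for Kravchuk, $b\ne -1$ forces $1+1/b\ne 0$; for Hahn, the exclusion $-\alpha-\beta\notin\{0,\dots,2N-2\}$ prevents the emerging linear factors from collapsing; and $b\ne 0$ handles Charlier).

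For the second part, I would substitute the Rodrigues formula into $\sum_{x\in S_\rho}P_n(x)x^k\rho(x)$ and apply the discrete Abel identity $\sum_x f(x)\nabla g(x)=-\sum_x \Delta f(x)\, g(x)+\text{boundary}$ exactly $n$ times. Since $\Delta^n(x^k)\equiv 0$ for $k<n$, the interior sum disappears and orthogonality reduces to the vanishing of all boundary contributions. Extending~$\rho_m$ from~$S_\rho$ to all of~$\mathbb Z$ by the formula in Table~\ref{tab:Dclassical}, the factors $1/\Gamma(x+1)$ and, for Kravchuk and Hahn, $1/\Gamma(N-m-x+1)$ vanish at every integer outside the appropriate window, so the extended~$\rho_m$ vanishes at each integer where a boundary contribution could arise. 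At~$+\infty$ for Charlier and Meixner the hypotheses $b\ne 0$ and $|b|<1$ give, respectively, super-geometric and geometric decay, ensuring absolute convergence. The remaining clauses of~D2---$\alpha\notin -S_\rho$ for Meixner and $\alpha,\beta\notin\{0,-1,\dots,1-N\}$ for Hahn---guarantee that~$\rho$ is finite and nonzero at every point of~$S_\rho$, so that the pointwise division of $\nabla^n\rho_n$ by~$\rho$ is unambiguous.

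I expect the delicate part to be this last boundary bookkeeping: each of the $n$ summation-by-parts steps produces contributions from~$\rho_n$ evaluated at points just outside~$S_\rho$, and since complex parameters rule out any positivity argument, one has to locate, step by step, the exact $\Gamma$-function zero that kills each such term. This is precisely why~D2 is phrased as a list of excluded parameter values rather than as an inequality of D1~type: every clause is there either to keep~$\rho$ finite and nonzero on~$S_\rho$, to force decay at~$+\infty$, or to guarantee the vanishing of one specific boundary contribution through a $\Gamma$-zero.
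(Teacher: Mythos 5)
Your overall strategy coincides with the paper's: peel off one difference at a time using the ratios $u_m=\rho_{m+1}(x)/\rho_m(x)$ and $v_m=\rho_{m+1}(x-1)/\rho_m(x)$ from Table~\ref{tab:Dclassical}, control the leading coefficient via Condition~D2, and obtain orthogonality by summation by parts with the boundary terms killed by zeros of the reciprocal Gamma function and by the decay at~$+\infty$ (the paper does one summation by parts per inductive step rather than $n$ at once, but that is an inessential variant, and your bookkeeping of where the extended weight vanishes is the same mechanism as the paper's).

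The genuine gap is in your degree count for the Hahn case. Your rule ``the leading coefficient of~$P_n$ is the product over the $n$ steps of the leading coefficients of the brackets $u_m-v_m$'' is only valid when $u_m$ and $v_m$ are linear. At each step one has
\[
P^{(n)}_m(x)=u_{n-m}(x)P^{(n)}_{m-1}(x)-v_{n-m}(x)P^{(n)}_{m-1}(x-1)
=\bigl(u_{n-m}(x)-v_{n-m}(x)\bigr)P^{(n)}_{m-1}(x)+v_{n-m}(x)\,\nabla P^{(n)}_{m-1}(x),
\]
and for Hahn, where $u_{n-m}$ and $v_{n-m}$ are \emph{quadratic} with equal leading coefficients, the second summand has the same degree $m$ as the first, so the shift $P^{(n)}_{m-1}(x-1)$ versus $P^{(n)}_{m-1}(x)$ contributes to the top coefficient and your product formula is simply false there. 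The correct computation is exactly what the paper's Lemma~\ref{lemma:diff_poly} provides: for $q_j(x)=ax^{r+1}+b_jx^r+\dots$ and $p(x)=x^m+\dots$ the top coefficient of $q_1(x)p(x)-q_2(x)p(x-1)$ is $b_1-b_2+am$, which in the Hahn case gives a factor proportional to $\alpha+\beta+2(n-m)+(m-1)$ at step $m$, not to $\alpha+\beta+2(n-m)$ as your ``bracket'' heuristic suggests. It so happens that both families of values lie in the set $\{0,1,\dots,2N-2\}$ excluded by Condition~D2, so the conclusion $\deg P_n=n$ is still true, but your argument as written identifies the wrong quantity whose nonvanishing must be checked and therefore does not prove it; you need the subleading-coefficient computation (or an equivalent) to close the Hahn case. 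A minor related point: the natural induction object is $\nabla^m\rho_n/\rho_{n-m}$ (weight index $n$ fixed, orthogonality with respect to $\rho_{n-m}$), not $\nabla^m\rho_m/\rho$; with your indexing the inductive step acts on a product $\rho_m q$ and the simple ``multiplication by a bracket'' identity no longer applies verbatim, which is precisely where the missing cross term enters.
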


The proof is given in the next section. To formulate a similar proposition regarding multiple
orthogonality, we need an analogue to the condition~D2, which will be denoted by~MD2 (Multiple
Discrete).

Suppose that~$\rho^{(1)},\dots,\rho^{(r)}$ is a set of classical discrete weights whose
parameters satisfy the conditions~D2. Let us introduce more specific notation. Denote by~$r_C$,
$r_M$, $r_K$ and~$r_H$, respectively, the number of the Charlier, Meixner, Kravchuk and Hahn
weights among~$\rho^{(1)},\dots,\rho^{(r)}$. In particular, $r_C + r_M + r_K + r_H = r$. Each of
the weights corresponds to its own parameters. To unify, we relate each weight~$\rho^{(j)}$ to
the quadruple of its parameters~$(\alpha_j,\beta_j,b_j,N_j)$ according to the following rules:
\begin{itemize}
\item The Hahn weight corresponds to~$b_j=1$, other weights correspond to~$\beta_j=0$.
\item For the Charlier and Kravchuk weights,~$\alpha_j=0$, while the Charlier and Meixner
    weights have~$N_j=+\infty$.
\end{itemize}
Put~$B\coloneqq\prod_{j=1}^{r} b_j$ and~$N\coloneqq\min_j (N_j)$. Then \textbf{Condition~MD2}
for the set~$\rho^{(1)},\dots,\rho^{(r)}$ consists in:
\begin{enumerate}[1)]
\addtocounter{enumi}{-1}
\item $B\ne 0$, and the parameters~$\alpha_j$ for the Meixner weights satisfy Condition~D2;
\item $r_C = 0 \ \&\ r_M>0 \implies |B|<1$;
\item $r_K = r \implies B \ne (-1)^r$;
\item $r_H > 0 \implies r_K = r - 1%
    \ \ \&\ \ B = (-1)^{r-1}%
    \ \ \&\ \
    \alpha_r,\beta_r\notin\{0,-1,\dots,1-N_r\}$\\
    $\phantom{r_H > 0 \implies{}}\&\ \ \alpha_r + \beta_r + rn -\sum_{j=1}^{r-1}
    N_j\notin\{0,-1,\dots,-n\}$ \ for all ~$n\in\{0,1,\dots,N-1\}$,\\ where~$\rho^{(r)}$ stands
    for the (only) Hahn weight.\label{item:Hahn}
\end{enumerate}
In particular, the
set~$\rho^{(1)},\dots,\rho^{(r)}$ cannot contain more than one Hahn weight under Condition~MD2.
If the Hahn weight is present, then its Condition~D2 is replaced by~\ref{item:Hahn}), and the
remaining weights must be the Kravchuk weights.

Now, we are ready to state our main result.
\begin{theorem}\label{th:main}
    Let~$\rho^{(1)},\dots,\rho^{(r)}$ be a set of discrete weights satisfying Condition~MD2.
    Let~$\gamma_1,\dots,\gamma_r$ be real numbers such that none of the
    differences~$\gamma_j-\gamma_k$ and~$\gamma_j-\gamma_k-\alpha_j$
    and~$\gamma_j-\gamma_k-\beta_k$ is integer for~$j\ne k$. Put
    \[
        R(x)\coloneqq\prod_{j=1}^r\rho^{(j)}(x-\gamma_j),
        \quad
        R_n(x)\coloneqq\prod_{j=1}^r\rho_n^{(j)}(x-\gamma_j).
    \]
    Then for~$n\le N$ the Rodrigues formula
    \begin{equation}\label{eq:md_Rodrigues}
        P_n(x) R (x) = \nabla^n R_n(x),
    \end{equation}
    determines a polynomial~$P_n(x)$ of degree~$rn$ that satisfies the following orthogonality
    conditions:
    \begin{equation}\label{eq:orthog_md}
        \sum_{x\in\gamma_j+S_{\rho^{(j)}}} P_n(x)  x^k R(x)=0,
        \quad
        k=0,\dots,n-1,
        \quad
        j=1,\dots,r.
    \end{equation}
\end{theorem}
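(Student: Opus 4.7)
The plan is to verify separately that the polynomial $P_n$ defined by~\eqref{eq:md_Rodrigues}
has degree exactly $rn$, and that the orthogonality conditions~\eqref{eq:orthog_md} hold.

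For the degree claim, I would first recall that for each individual classical weight the
identity $\nabla\rho_m^{(j)}(y)=\phi_{j,m}(y)\,\rho_{m-1}^{(j)}(y)$ (with $\phi_{j,m}$ a
polynomial of degree $\deg\sigma^{(j)}\in\{1,2\}$ that can be read off rows three and four of
Table~\ref{tab:Dclassical}) holds, so that iterating gives
$\nabla^{k}\rho_n^{(j)}(y)=\Phi_{j,k}(y)\,\rho_{n-k}^{(j)}(y)$ for a computable polynomial
$\Phi_{j,k}$. Applying the discrete Leibniz rule
\[
    \nabla^{n}(f_1\cdots f_r)(x)=\sum_{k_1+\cdots+k_r=n}\binom{n}{k_1,\ldots,k_r}
    \prod_{j=1}^{r}(\nabla^{k_j}f_j)(x-s_j),\qquad s_j\coloneqq\sum_{i<j}k_i,
\]
to $f_j(x)=\rho_n^{(j)}(x-\gamma_j)$ and dividing by $R(x)$ expresses $P_n(x)$ as a finite sum
whose typical term is a product of polynomials $\Phi_{j,k_j}(x-\gamma_j-s_j)$ with ratios
$\rho_{n-k_j}^{(j)}(x-\gamma_j-s_j)/\rho^{(j)}(x-\gamma_j)$. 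Each such ratio is an explicit
rational function built from the gamma factors, and the non-integrality hypotheses on
$\gamma_j-\gamma_k$, $\gamma_j-\gamma_k-\alpha_j$ and $\gamma_j-\gamma_k-\beta_k$ guarantee
that no denominator can vanish on the lattices of interest. A term-by-term degree count,
performed according to which of the four classical families appear, yields $\deg P_n=rn$;
non-vanishing of the leading coefficient is exactly what the numerical restrictions in MD2 are
designed to ensure.

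For the orthogonality, I would fix $j\in\{1,\ldots,r\}$ and $k\in\{0,\ldots,n-1\}$ and apply
the discrete summation-by-parts formula
\[
    \sum_{x\in S}f(x)\nabla g(x)=\bigl[\,\text{boundary}\,\bigr]
    -\sum_{x}(\Delta f)(x)\,g(x)
\]
iteratively $n$ times with $f(x)=x^{k}$ and $g=R_n$ on $S=\gamma_j+S_{\rho^{(j)}}$. After $n$
steps the bulk term carries the factor $\Delta^{n}x^{k}=0$. It then remains to show that every
boundary contribution is zero. At the lower endpoint these contributions involve $R_n$ at
$\gamma_j-1,\gamma_j-2,\ldots,\gamma_j-n$: at each such point the factor
$\rho_n^{(j)}(l)$ with $l\in\{-1,\ldots,-n\}$ vanishes because the denominator $\Gamma(l+1)$
has a pole, while the remaining factors $\rho_n^{(k)}(\gamma_j-\gamma_k+l)$ for $k\ne j$ stay
finite thanks to the non-integrality hypotheses. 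For the upper endpoint of a Kravchuk or Hahn
support the same mechanism uses that $\Gamma(N_j-n-l+1)^{-1}$ vanishes for
$l\in\{N_j-n+1,\ldots,N_j\}$, which is admissible precisely because $n\le N\le N_j$. Absolute
convergence on the unbounded Charlier/Meixner lattices is provided by the decay of
$b_j^{x}$ combined with the condition $|B|<1$ imposed by MD2(1).

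The main obstacle is the boundary analysis in the Hahn case of MD2(3). There $|B|=1$, so the
cheap exponential-decay argument fails and one must instead use the technical clause
$\alpha_r+\beta_r+rn-\sum_{j<r}N_j\notin\{0,-1,\ldots,-n\}$ together with
$\alpha_r,\beta_r\notin\{0,-1,\ldots,1-N_r\}$ to certify that the Hahn factor
$\rho_n^{(r)}(x-\gamma_r)$ still produces the zeros demanded by summation by parts without
colliding with poles of the neighbouring Kravchuk factors. I expect this case, where the
interplay between MD2 and the shift hypotheses becomes indispensable, to require the heaviest
bookkeeping, and it is simultaneously what secures the non-vanishing of the leading
coefficient of $P_n$ completing the degree claim.
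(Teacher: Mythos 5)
Your treatment of the orthogonality relations \eqref{eq:orthog_md} is correct and is in substance the paper's own mechanism: repeated summation by parts kills the bulk term through $\Delta^n x^k=0$, the boundary contributions vanish because every shifted weight $\rho^{(j)}_n$ carries reciprocal Gamma factors that are zero at the points $\gamma_j-1,\dots,\gamma_j-n$ and $\gamma_j+N_j-n+1,\dots,\gamma_j+N_j$, the non-integrality of $\gamma_j-\gamma_k$, $\gamma_j-\gamma_k-\alpha_j$, $\gamma_j-\gamma_k-\beta_k$ keeps the other factors pole-free, and decay handles the unbounded lattices; the paper merely packages the same computation as an induction on $m$ via $\nabla^m R_n=\nabla\bigl[P^{(n)}_{m-1}R_{n-m+1}\bigr]$, one summation by parts per step (compare \eqref{eq:check_orthog}).

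The degree claim, however, has a genuine gap, and it sits exactly in the Hahn case. In your Leibniz expansion the Hahn factor contributes, for a multi-index $(k_1,\dots,k_r)$, the polynomial $\Phi_{r,k_r}$ (degree at most $k_r$) times the ratio $\rho^{(r)}_{n-k_r}(x-\gamma_r-s_r)/\rho^{(r)}(x-\gamma_r)$, and that ratio is a polynomial of degree $2(n-k_r)$: the two Pochhammer quotients and the two reciprocal-Gamma quotients contribute degrees $n-k_r-s_r$, $s_r$, $s_r$, $n-k_r-s_r$. Each Kravchuk factor contributes at most $n$, so a single term can have degree as large as $(r+1)n-k_r$; already for $k_r=0$ the termwise bound is $(r+1)n$, not $rn$. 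Hence a ``term-by-term degree count'' cannot even establish $\deg P_n\le rn$ in the Hahn case --- you would have to prove that the coefficients of $x^{rn+1},\dots,x^{(r+1)n}$ cancel across the whole multinomial sum, and then that the coefficient of $x^{rn}$ survives. In the Hahn-free cases each term does have degree at most $rn$, but the non-vanishing of the sum of the leading coefficients of the $\binom{n+r-1}{r-1}$ terms is precisely what has to be proved; saying that MD2 ``is designed to ensure'' it is assuming the conclusion, and nothing in your plan performs that summation.

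The paper avoids all of this by proving the stronger statement $R_{n-m}P^{(n)}_m=\nabla^m R_n$ by induction on $m$: the one-step recursion \eqref{eq:md_pm_mpm1} reduces the leading-coefficient question to comparing just two polynomials $U_{n-m}$ and $V_{n-m}$, whose top coefficients are of the form $(-1)^{r_K}$ versus $1/B$ (clauses 1 and 2 of MD2), and in the Hahn case, where these top coefficients coincide, Lemma~\ref{lemma:diff_poly} extracts the next coefficient, proportional to $\alpha_r+\beta_r+r(n-1)+(n-m)-\sum_{j<r}N_j$, which the last clause of MD2 makes nonzero. Note also that you misread the role of that clause: under MD2 with $r_H>0$ all remaining weights are Kravchuk and every support is finite, so there is no convergence or decay issue and $|B|=1$ causes no trouble in the summation by parts; the clause has nothing to do with boundary terms --- it is exactly the non-degeneracy of the leading coefficient. (A further sign that factor-wise bookkeeping is the wrong frame: MD2 constrains only the product $B$, not the individual Kravchuk parameters, so your individual $\Phi_{j,k_j}$ may degenerate in degree even though $\deg P_n=rn$.)
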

That is to say, under the conditions of Theorem~\ref{th:main}, the polynomial~$P_n$ determined
by the Rodrigues formula~\eqref{eq:md_Rodrigues} is a multiple orthogonal polynomial:
\begin{equation*}%\label{eq:orthog_md_m}
    \int P_n(x)  x^k \, d\mu_j(x)=0,
    \quad
    k=0,\dots,n-1,
    \quad
    j=1,\dots,r
\end{equation*}
with respect to~$r$ discrete (generally speaking, complex-valued) measures
\begin{equation*}%\label{eq:orthog_md_m}
    \mu_j(y)=\sum_{x\in\gamma_j+S_{\rho^{(j)}}} R(x)\delta(y-x)
\end{equation*}
with the pairwise disjoint supports~$\supp \mu_j(y)=\gamma_j+S_{\rho^{(j)}}$ and the common
analytic weight function~$R$.

Observe that the weight function~$R$ satisfies the difference Pearson equation
\begin{equation}\label{eq:md_Pearson}
    \Delta \big( \sigma(x) R(x) \big)
    = \tau(x) R(x)
    ,
\end{equation}
where~$\sigma$ and~$\tau$ are polynomials of degrees~$\deg\sigma\le r+1$ and~$\deg\tau=r$.

The next section is devoted to the proofs of Proposition~\ref{th:2} and Theorem~\ref{th:main}.
In Section~\ref{sect:classification}, we discuss the Conditions~MD1 for the case~$r=2$ which are
similar to Conditions~D1. That is to say, we discuss the conditions under which the
measures~$\mu_j$ are positive. On this way, we introduce several new classes of multiple
orthogonal polynomials with respect to the products of classical weights.

\section{Proof of main results}

\subsection{Proof of Proposition~\ref{th:2}}

Proposition~\ref{th:2} is a particular case of Theorem~\ref{th:main}. To give here a brief proof
of Proposition~\ref{th:2}, we need a simple lemma which can be verified by a straightforward
calculation: %auxiliary fact
\begin{lemma}\label{lemma:diff_poly}
    Let~$q_1, q_2, p$ be monic polynomials:
    \[
        q_j(x)=a x^{r+1}+b_jx^{r}+\dots,
        \quad j=1,2;
        \quad
        p(x)=x^{m}+\dots,
    \]
    then
    \[
        q_1(x)p(x)-q_2(x)p(x-1) = (b_1-b_2 + a m) x^{r+m}+\dots,
    \]
    where the ellipses stand for the terms of lower degrees.
\end{lemma}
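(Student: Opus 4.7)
The plan is to verify the claim by a direct expansion of both products and to compare the coefficients of $x^{r+m+1}$ and $x^{r+m}$. Since the assertion is purely about the two top-degree coefficients of the difference $q_1(x)p(x)-q_2(x)p(x-1)$, no clever manipulation is needed; everything is determined by the top two coefficients of $q_1$, $q_2$, $p$, and $p(x-1)$.

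First I would write $p(x)=x^m+c\,x^{m-1}+\dots$, where $c$ denotes the subleading coefficient of $p$, and observe using the binomial expansion of $(x-1)^m$ that
\[
    p(x-1)=x^m+(c-m)\,x^{m-1}+\dots.
\]
This is the one place where the shift matters: passing from $p(x)$ to $p(x-1)$ decreases the coefficient of $x^{m-1}$ by exactly $m$, while leaving the leading coefficient unchanged.

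Next I would expand
\[
    q_1(x)p(x)=a\,x^{r+m+1}+(ac+b_1)\,x^{r+m}+\dots,
    \qquad
    q_2(x)p(x-1)=a\,x^{r+m+1}+\bigl(a(c-m)+b_2\bigr)\,x^{r+m}+\dots.
\]
The top terms $a\,x^{r+m+1}$ cancel in the difference, and subtracting the two coefficients of $x^{r+m}$ gives $(ac+b_1)-\bigl(a(c-m)+b_2\bigr)=am+b_1-b_2$, which is the claimed coefficient. All further contributions lie in degrees strictly below $r+m$ and are absorbed in the ellipsis.

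There is no real obstacle here; the only thing to be careful about is that the subleading coefficient $c$ of $p$ appears in both terms multiplied by the same leading coefficient $a$, so it drops out, and the surviving $am$ comes entirely from the binomial shift in $p(x-1)$.
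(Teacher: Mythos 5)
Your proof is correct, and it is precisely the ``straightforward calculation'' that the paper invokes without writing out: expand both products, note that the leading terms $a\,x^{r+m+1}$ cancel, and track the coefficient of $x^{r+m}$, where the subleading coefficient $c$ of $p$ drops out and the binomial shift in $p(x-1)$ contributes the $am$. Nothing further is needed.
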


We also need the formula for summation by parts:
\[
    \sum_{x=0}^N g(x-1)\nabla f(x) = g(N) f(N) - g(-1)f(-1) - \sum_{x=0}^N f(x)\nabla g(x),
\]
as well as the following notation (see table~\ref{tab:Dclassical}):
\begin{equation}\label{eq:uv_def}
    u_m(x)\coloneqq \frac{\rho_{m+1}(x)}{\rho_m(x)}
    \quad\text{and}\quad
    v_m(x)\coloneqq \frac{\rho_{m+1}(x-1)}{\rho_m(x)}.
\end{equation}

\begin{lemma}
    Let~$\rho(x)$ be one of the classical weights from Table~\ref{tab:Dclassical} such that
    Condition~D2 is satisfied. Then, for all~$m=0,1,\dots,n$ and~$n=0,1,\dots,N$ the equality
    \begin{equation}\label{eq:our_discr_Rodrigues}
        \rho_{n-m}(x)P^{(n)}_m(x) = \nabla^m\rho_n(x),
    \end{equation}
    determines a polynomial~$P^{(n)}_m(x)$ of degree~$m$ orthogonal with the weight~$\rho_{n-m}$
    on~$S_{\rho}=\{0,1,\dots,N\}$. (In the Charlier and Meixner cases, we
    put~$N=+\infty$.)
\end{lemma}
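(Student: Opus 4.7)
The plan is to fix $n\in\{0,1,\dots,N\}$ and induct on $m$, showing at each step that $P^{(n)}_m(x)\coloneqq\nabla^m\rho_n(x)/\rho_{n-m}(x)$ is a polynomial of degree exactly~$m$ that is orthogonal to $1,x,\dots,x^{m-1}$ with weight~$\rho_{n-m}$ on~$S_\rho$. The base case $m=0$ gives $P^{(n)}_0\equiv 1$ and is trivial. For the inductive step I would write
\[
\nabla^m\rho_n(x)=\rho_{n-m+1}(x)P^{(n)}_{m-1}(x)-\rho_{n-m+1}(x-1)P^{(n)}_{m-1}(x-1),
\]
divide through by $\rho_{n-m}(x)$, and substitute the notation from~\eqref{eq:uv_def} to obtain
\[
P^{(n)}_m(x)=u_{n-m}(x)P^{(n)}_{m-1}(x)-v_{n-m}(x)P^{(n)}_{m-1}(x-1).
\]
Since Table~\ref{tab:Dclassical} makes $u_{n-m}$ and $v_{n-m}$ polynomials, $P^{(n)}_m$ is automatically a polynomial, and the real work lies in controlling its leading term.

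In the Charlier, Meixner and Kravchuk cases the ratios $u_{n-m}$ and $v_{n-m}$ have degree at most one with distinct leading coefficients, so a short calculation shows that the leading coefficient of $P^{(n)}_m$ is a nonzero multiple of that of $P^{(n)}_{m-1}$; the multipliers are $-1/b$, $(b-1)/b$ and $-(b+1)/b$ respectively, all nonzero under Condition~D2 (which excludes $b=0$, $b=1$ and $b=-1$). In the Hahn case, $u_{n-m}$ and $v_{n-m}$ both have degree two with the common leading coefficient $-1/D$, where $D\coloneqq(\alpha+n-m)(\beta+n-m)$, so I would apply Lemma~\ref{lemma:diff_poly} with $r=1$ to the rescaled monic polynomials $-Du_{n-m}$ and $-Dv_{n-m}$ together with the monic normalization of $P^{(n)}_{m-1}$. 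This produces a leading coefficient of $P^{(n)}_m$ proportional to $-(\alpha+\beta+2n-m-1)/D$; as $(n,m)$ ranges over $1\le m\le n\le N$, the numbers $\alpha+\beta+2n-m-1$ traverse exactly the arithmetic progression forbidden by Condition~D2, while the nonvanishing of~$D$ matches the exclusion $\alpha,\beta\notin\{0,-1,\dots,1-N\}$.

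For the orthogonality, compute
\[
\sum_{x\in S_\rho}P^{(n)}_m(x)\,x^k\rho_{n-m}(x)=\sum_{x\in S_\rho}x^k\nabla^m\rho_n(x)
\]
and iterate the given summation-by-parts identity $k+1$ times. Each iteration transfers a $\nabla$ from $\rho_n$ onto the polynomial~$x^k$ and lowers its degree by one, so after $k+1\le m$ steps the polynomial factor is identically zero and the interior sum disappears. The boundary contributions at every intermediate stage have the form $(\text{polynomial})\cdot\nabla^s\rho_n|_{x=-1,\,N}$ with $s<n$; using the factorization $\nabla^s\rho_n=\rho_{n-s}P^{(n)}_s$ already supplied by the induction, they all vanish because in the Kravchuk and Hahn cases $\rho_{n-s}$ is supported on $\{0,1,\dots,N-(n-s)\}$, which never contains $-1$ and misses $N$ whenever $s<n$. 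In the Charlier and Meixner cases the boundary at $+\infty$ vanishes thanks to the super-exponential (respectively exponential, using $|b|<1$ from~D2) decay of~$\rho_n$, which also supplies absolute convergence of the sums.

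I expect the main obstacle to be the Hahn part of the degree argument: verifying that the family of non-degeneracy conditions $\alpha+\beta+2n-m-1\ne 0$ for $1\le m\le n\le N$ lines up precisely with the arithmetic progression excluded by Condition~D2. Once this matching together with the parallel check $(\alpha+n-m)(\beta+n-m)\ne 0$ is in place, the remainder of the argument is essentially routine and proceeds uniformly across all four classical families.
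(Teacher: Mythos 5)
Your proposal is correct and follows essentially the same route as the paper: induction on $m$ through the recursion $P^{(n)}_m(x)=u_{n-m}(x)P^{(n)}_{m-1}(x)-v_{n-m}(x)P^{(n)}_{m-1}(x-1)$, the identical leading-coefficient analysis (Lemma~\ref{lemma:diff_poly} in the Hahn case, where your factor $\alpha+\beta+2n-m-1$ and the non-vanishing of $(\alpha+n-m)(\beta+n-m)$ match Condition~D2 exactly), and summation by parts with vanishing boundary terms. The only deviations are minor: you iterate summation by parts $k+1$ times using just the factorization $\nabla^{s}\rho_n=\rho_{n-s}P^{(n)}_{s}$, whereas the paper applies it once and then invokes the inductive orthogonality of $P^{(n)}_{m-1}$, and in the Charlier/Meixner cases you should also note (as the paper does) that the boundary term at $x=-1$ vanishes because $1/\Gamma(x+1)$ has a zero there, not only the term at $+\infty$.
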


\begin{proof}
    Induction on~$m$. If~$m=0$, the formula~\eqref{eq:our_discr_Rodrigues} yields the
    polynomial~$P^{(n)}_0(x)\equiv 1$ of zero degree, for which the orthogonality conditions are
    trivial.

    Now, for~$m\ge 1$ let~$P^{(n)}_{m-1}$ determined by~\eqref{eq:our_discr_Rodrigues} be a
    polynomial of degree~$m-1$ such that
    \begin{equation}\label{eq:orthog_md_dp}
        \sum_{x\in S_{\rho}} P^{(n)}_{m-1}(x)  x^k \rho_{n-m}(x)=0,
        \quad
        k=0,\dots,m-2.
    \end{equation}
    Then~$P^{(n)}_{m}(x)$ and~$P^{(n)}_{m-1}(x)$ turn to be related through
    \[
        \rho_{n-m}(x)P^{(n)}_m(x)
        = \nabla^m\rho_n(x)
        = \nabla\left[ P^{(n)}_{m-1}(x) \rho_{n-m+1}(x)\right].
    \]
    First, we need to verify that~$P^{(n)}_m(x)$ is a polynomial of degree~$m$. The last
    expression implies that
    \begin{equation}\label{eq:poly_relation}
        P^{(n)}_m(x) = u_{n-m}(x) P^{(n)}_{m-1}(x) -  v_{n-m}(x) P^{(n)}_{m-1}(x-1).
    \end{equation}
    As is seen from Table~\ref{tab:Dclassical}, the functions~$u_{n-m}(x)$ and~$v_{n-m}(x)$ are
    linear for the non-Hahn cases. Under Condition~D2, cancellation of the leading term does not
    occur, and hence the left-hand side of~\eqref{eq:poly_relation} is a polynomial of
    degree~$m$.

    In the Hahn case, the polynomials~$u_{n-m}(x)$ and~$v_{n-m}(x)$ are quadratic, and their
    leading coefficients coincide. That is, their difference is linear:
    \[
        v_m(x)-u_m(x)
        = \frac{(\beta + \alpha +2m)}{(\alpha+m)(\beta+m)}\cdot x + \frac{m - N}{\beta+m},
    \]
    since the contributions of their leading coefficients cancel each other independently of the
    parameters:
    \[
        [x^{m+1}]P^{(n)}_m(x)
        = [x^{m+1}]\left( u_{n-m}(x) P_{m-1}^{(n)}(x) - v_{n-m}(x)P_{m-1}^{(n)}(x-1)\right)
        =0.
    \]
    The next coefficient may be calculated with the help of Lemma~\ref{lemma:diff_poly}:
    \[
        \begin{aligned}
        {[x^{m}]}P^{(n)}_m(x)
        &= [x^{m}]\left( u_{n-m}(x) P_{m-1}^{(n)}(x) - v_{n-m}(x)P_{m-1}^{(n)}(x-1)\right)
        \\
        &= -\frac{\beta +\alpha + 2(n-m)+(m-1)}{(\alpha+n-m)(\beta+n-m)}\cdot [x^{m-1}]P^{(n)}_{m-1}(x).
        \end{aligned}
    \]
    This coefficient does not cancel when Condition~D2 from Table~\ref{tab:Dclassical} holds.
    Consequently, the right-hand side of~\eqref{eq:poly_relation} is a polynomial of degree~$m$
    in the Hahn case as well.

    The next aim is to check that~$P^{(n)}_m(x)$ satisfies the orthogonality conditions.
    Let~$S_\rho=\{0,1,\dots,N\}$, where~$N\in\mathbb{N}\cup \{\infty\}$, then for~$k\le m-1$
    \begin{equation}\label{eq:check_orthog}
        \begin{aligned}
            \sum_{x=0}^N P^{(n)}_m(x) x^k \rho_{n-m}(x)
            &=
            \sum_{x=0}^N x^k \nabla\big[P^{(n)}_{m-1}(x) \rho_{n-m+1}(x)\big]
            \\
            &=
            (x+1)^kP_{m-1}^{(n)}(x)\rho_{n-m+1}(x)\Big|_{x=-1}^{N}
            -
            \sum_{x=0}^N P_{m-1}^{(n)}(x)\rho_{n-m+1}(x) \nabla (x+1)^k
            .
        \end{aligned}
    \end{equation}
    By the induction hypothesis, the sum on the right-hand side is zero.
    Moreover,
    \[
        \rho_{n-m+1}(-1)=\rho_{n-m+1}(N)=0
        \quad\text{when}\quad
        m\le n,
    \]
    since the reciprocal Gamma function~$1/\Gamma(x)$ has zeros at integer non-positive points.%
    \footnote{More generally, the weight~$\rho_{n-m+1}(x)$ vanishes for each
        integer~$x\in[-n+m-1,-1]\cup[N-n+m,N]$, which explains why one may use~$S_\rho$ instead
        of~$\{0,1,\dots,N-n+m-1\}$ as the of orthogonality region for~$\rho_{n-m+1}$.}
    For the Charlier and Meixner weights corresponding to~$N=\infty$, Condition~D2 yields a
    superpolynomial rate of vanishing of~$\rho_{n-m+1}(x)$ as~$x\to\infty$. Therefore, the first
    term on the right-hand side is also zero. The inductive step is complete, so the lemma is
    proved.
\end{proof}

Proposition~\ref{th:2} follows from this lemma for~$m=n$, because~$\rho=\rho_0$.

\subsection{Proof of Theorem~\ref{th:main}}
Theorem~\ref{th:main} is a particular case of the following lemma:
\begin{lemma}
    Under the conditions of Theorem~\ref{th:main}, the equality
    \begin{equation}\label{eq:our_mdiscr_Rodrigues}
        R_{n-m}(x) P^{(n)}_m(x) = \nabla^{m}R_n(x)
    \end{equation}
    for~$n\le N$ and~$m\in\{0,\dots,n\}$ determines a polynomial~$P^{(n)}_m(x)$ of degree~$rm$
    which satisfies the conditions of multiple orthogonality with the weight~$R_{n-m}$ on the
    sets~$\gamma_j+S_{\rho^{(j)}}$:
    \[
        \sum_{x\in\gamma_j+S_{\rho^{(j)}}} P^{(n)}_m(x)  x^k R_{n-m}(x)=0,
        \quad
        k=0,\dots,m-1,
        \quad
        j=1,\dots,r.
    \]
\end{lemma}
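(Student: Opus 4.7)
My plan is to induct on $m$ with $n$ fixed, following the template of the single-weight lemma just proved. The base case $m=0$ gives $P^{(n)}_0\equiv 1$, which has degree $0=rm$ and satisfies the orthogonality vacuously. For $m\ge 1$, the defining identity~\eqref{eq:our_mdiscr_Rodrigues} rewrites as
\[
P^{(n)}_m(x)=U_{n-m}(x)\,P^{(n)}_{m-1}(x)-V_{n-m}(x)\,P^{(n)}_{m-1}(x-1),
\]
where $U_s(x)\coloneqq\prod_{j=1}^{r}u^{(j)}_s(x-\gamma_j)$ and $V_s(x)\coloneqq\prod_{j=1}^{r}v^{(j)}_s(x-\gamma_j)$ are the product analogues of~\eqref{eq:uv_def}. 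The non-integrality hypothesis on $\gamma_j-\gamma_k$, $\gamma_j-\gamma_k-\alpha_j$, and $\gamma_j-\gamma_k-\beta_k$ keeps each factor $\rho^{(j)}_s(x-\gamma_j)$ finite and nonzero on every shifted lattice $\gamma_k+S_{\rho^{(k)}}$ with $k\ne j$, so the products $R_s$ are unambiguously defined on the union of supports.

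To verify $\deg P^{(n)}_m=rm$, I would split into three cases dictated by Condition~MD2. When $r_H=0$ and $r_C\ge 1$, the inequality $\deg V>\deg U$ places the leading term on $-V(x)P^{(n)}_{m-1}(x-1)$ alone, giving degree $rm$. When $r_H=0=r_C$, both $U$ and $V$ have degree $r$ with leading coefficients $(-1)^{r_K}$ and $1/B$, which differ by MD2.1) in the presence of any Meixner weight and by MD2.2) when every weight is Kravchuk. When $r_H=1$ (so $r_K=r-1$), both $U$ and $V$ have degree $r+1$ with equal top coefficients (the Hahn cancellation of the single-weight proof extends verbatim to the product), so Lemma~\ref{lemma:diff_poly} extracts the degree-$rm$ coefficient, and the arithmetic constraint MD2.3) is calibrated exactly to keep this coefficient non-zero for every $m\in\{1,\ldots,n\}$.

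For orthogonality, I would apply summation by parts on each shifted lattice. Setting $f=R_{n-m+1}P^{(n)}_{m-1}$ and $g(x-1)=x^k$, the sum
\[
\sum_{x\in\gamma_j+S_{\rho^{(j)}}}P^{(n)}_m(x)\,x^k\,R_{n-m}(x)=\sum_{x\in\gamma_j+S_{\rho^{(j)}}}x^k\,\nabla f(x)
\]
reduces to boundary values at $x=\gamma_j-1$ and the upper end of the $j$-th lattice, minus a residual $\sum f(x)\,\nabla(x+1)^k$. The residual vanishes by the inductive hypothesis because $\nabla(x+1)^k$ has degree $k-1\le m-2$. The lower boundary vanishes since $\rho^{(j)}_{n-m+1}(x-\gamma_j)$ carries $1/\Gamma(x-\gamma_j+1)$, which is zero at $x=\gamma_j-1$. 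For finite $S_{\rho^{(j)}}$ the upper boundary vanishes by the analogous reciprocal-Gamma factor (using $m\le n$); for infinite $S_{\rho^{(j)}}$ it vanishes because the whole product $R_{n-m+1}$ decays at infinity---factorially whenever a Charlier factor is present, and like $|B|^x$ up to polynomial factors otherwise, which is summable by MD2.1).

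I expect the Hahn sub-case of the degree computation to be the main obstacle: the leading terms of $U(x)P^{(n)}_{m-1}(x)$ and $V(x)P^{(n)}_{m-1}(x-1)$ are designed to cancel, and non-vanishing of the degree-$rm$ coefficient must be read off from a subleading analysis that is parallel to, but structurally more entangled than, the one in the single-weight proof. The explicit arithmetic condition on $\alpha_r+\beta_r+rn-\sum_{j<r}N_j$ built into MD2.3) is exactly what survives of this calculation once the subleading coefficients are tracked through the product.
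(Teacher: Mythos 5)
Your outline follows the paper's proof essentially step for step: induction on $m$, the rewriting $P^{(n)}_m(x)=U_{n-m}(x)P^{(n)}_{m-1}(x)-V_{n-m}(x)P^{(n)}_{m-1}(x-1)$ with $U_s,V_s$ the products of the $u^{(j)}_s,v^{(j)}_s$ over the shifted arguments, the same case split ($r_C>0$; $r_C=0<r_M$; $r=r_K$; $r_H=1$) matched to the clauses of Condition~MD2, and summation by parts for the orthogonality, with the boundary terms killed because the $j$-th factor of $R_{n-m+1}$ vanishes at $x=\gamma_j-1$ and at the upper end while (by the non-integrality of $\gamma_j-\gamma_k$, $\gamma_j-\gamma_k-\alpha_j$, $\gamma_j-\gamma_k-\beta_k$) the remaining factors stay regular there --- a point you state only for lattice points of the other supports, but which is needed at the boundary points themselves; this is a small tightening, not a different argument.

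The genuine gap is the Hahn sub-case, which you yourself flag as the main obstacle and then resolve only by asserting that MD2,~3) is ``calibrated exactly'' to keep the degree-$rm$ coefficient nonzero --- but that calibration is precisely what has to be proved, and in a blind attempt it cannot be taken on faith. Two computations are missing. First, the equality of the degree-$(r+1)$ coefficients of $U_{n-m}$ and $V_{n-m}$ is not ``verbatim'' from the single-weight case: it uses $B=(-1)^{r-1}$ from MD2,~3) together with the signs contributed by the $r-1$ Kravchuk factors. Second, one must actually extract the $x^r$ coefficients of $U_{n-m}$ and $V_{n-m}$ (the paper does this by writing both in factored form and applying Vieta's formulas) and feed their difference into Lemma~\ref{lemma:diff_poly}, which yields
\[
(-1)^r\,[x^{rm}]P^{(n)}_m
=\Bigl(\alpha+\beta+r(n-1)+(n-m)-\sum_{j=1}^{r-1}N_j\Bigr)
\frac{[x^{r(m-1)}]P^{(n)}_{m-1}}{(\alpha+n-m)(\beta+n-m)} .
\]
Only with this explicit expression can one check that, as $m$ runs over $\{1,\dots,n\}$ for every admissible $n$, the factor never vanishes --- and that this is exactly the arithmetic clause on $\alpha_r+\beta_r+rn-\sum_{j<r}N_j$ in MD2,~3), while the denominators are harmless because $\alpha,\beta\notin\{0,-1,\dots,1-N_r\}$. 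Without these steps the degree claim in the only delicate case rests on an appeal to the intended design of the hypothesis rather than on a proof; the remainder of your proposal coincides with the paper's argument.
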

\begin{proof}
    We are using the induction on~$m$. Similarly to the case of orthogonality with respect to
    one measure, the equality~\eqref{eq:our_mdiscr_Rodrigues} with~$m=0$ is satisfied only by
    the polynomial~$P^{(n)}_m\equiv 1$ of degree~$0$. Observe that
    \[
        R_{n-m}(x) P^{(n)}_m(x) = \nabla^{m}R_n(x)= \nabla\left[P^{(n)}_{m-1}(x) R_{n-m+1}(x)\right]
    \]

    As an induction hypothesis, assume that~$P^{(n)}_{m-1}\not\equiv 0$ for~$m\in\{1,2,\dots,n\}$ is
    a polynomial of degree~$r(m-1)$ such that
    \[
        \sum_{x\in\gamma_j+S_{\rho^{(j)}}} P^{(n)}_{m-1}(x)  x^k R_{n-m+1}(x)=0,
        \quad
        k=0,\dots,m-2,
        \quad
        j=1,\dots,r.
    \]
    Let us check that~$P^{(n)}_{m}(x)$ is a polynomial of degree~$rm$. Indeed,
    \[
        \begin{aligned}
            P^{(n)}_m(x) %
            &= \frac{ R_{n-m+1}(x)}{R_{n-m}(x)} P^{(n)}_{m-1}(x)
            - \frac{ R_{n-m+1}(x-1)}{R_{n-m}(x)} P^{(n)}_{m-1}(x-1)\\
            &= \prod_{j=1}^r\frac{\rho^{(j)}_{n-m+1}(x-\gamma_j)}{\rho^{(j)}_{n-m}(x-\gamma_j)} P^{(n)}_{m-1}(x)
            - \prod_{j=1}^r\frac{\rho^{(j)}_{n-m+1}(x-\gamma_j-1)}{\rho^{(j)}_{n-m}(x-\gamma_j)} P^{(n)}_{m-1}(x-1)\\
            &= \prod_{j=1}^ru^{(j)}_{n-m}(x-\gamma_j) \cdot P^{(n)}_{m-1}(x)
            - \prod_{j=1}^rv^{(j)}_{n-m}(x-\gamma_j) \cdot P^{(n)}_{m-1}(x-1),
        \end{aligned}
    \]
    where~$u_m^{(j)}$ and~$v_m^{(j)}$ are defined analogously to~\eqref{eq:uv_def}. Therefore,
    the following representation is true:
    \begin{equation}\label{eq:md_pm_mpm1}
            P^{(n)}_m(x) 
            = U_{n-m}(x) \cdot P^{(n)}_{m-1}(x)
            - V_{n-m}(x) \cdot P^{(n)}_{m-1}(x-1),
    \end{equation}
    where
    \[
        U_m(x)\coloneqq \prod_{j=1}^ru^{(j)}_{m}(x-\gamma_j)
        \quad
        \text{and}
        \quad
        V_m(x)\coloneqq \prod_{j=1}^rv^{(j)}_{m}(x-\gamma_j).
    \]
    Since~$U_{n-m}(x)$ and~$V_{n-m}(x)$ are polynomials,~$P^{(n)}_m(x)$ is also a polynomial.

    To determine its degree, first consider the case~$r_H=0$, that is when there is no Hahn
    weights among~$\rho^{(1)},\dots,\rho^{(r)}$. If so, then all~$u^{(j)}_{n-m}$
    and~$v^{(j)}_{n-m}$ are linear, and hence~$\deg P^{(n)}_{m}\le rm$.

    When one of the weights~$\rho^{(1)},\dots,\rho^{(r)}$ is a Charlier weight,
    then~$\deg V_{n-m}=r>\deg U_{n-m}$. Consequently, if~$r_C>0$,
    then~$\deg P_m^{(n)}=r+\deg P_{m-1}^{(n)}=rm$.

    If~$r_C=0$ and~$r_M>0$, then
    \[
        [x^r] U_{n-m}=(-1)^{r_K},
        \quad
        [x^r] V_{n-m}=\frac 1 B.
    \]
    By Condition~MD2 in this case~$|B|<1$, so the leading coefficients of~$U_{n-m}$
    and~$V_{n-m}$ are different in absolute value, and thus~$\deg P_{m}^{(n)}=rm$.

    If~$r_H=r_C=r_M=0$, that is~$r=r_K$, then
    \[
        [x^r] U_{n-m}=(-1)^{r},
        \quad
        [x^r] V_{n-m}=\frac 1 B.
    \]
    Condition~MD2 here implies~$B\ne (-1)^{r}$, and hence~$\deg P_{m}^{(n)}=rm$ as well.

    Now, let~$r_H>0$. By Condition~MD2, we have~$r_H=1$, ~$r_K=r-1$ and~$B=(-1)^{r-1}$. Without
    loss of generality assume that the Hahn weight is denoted by~$\rho^{(r)}$. The
    polynomials~$U_{n-m}$ and~$V_{n-m}$ both have the degree~$r+1$. Their leading coefficients
    however coincide:
    \[
        (\alpha+n-m)(\beta+n-m)\cdot [x^{r+1}]U_{n-m}=(-1)^r,
        \quad
        (\alpha+n-m)(\beta+n-m)\cdot [x^{r+1}]V_{n-m}=-\frac 1{B}=(-1)^{r}.
    \]
    To calculate the coefficients of~$U_{n-m}$ and~$V_{n-m}$ at~$x^r$, note that these
    polynomials can be written as
    \[
        \begin{aligned}
            U_{n-m}&=\frac{x-\gamma_r+\alpha+(n-m)}{(\alpha+n-m)(\beta+n-m)}
            \prod_{j=1}^{r}\big(N_j-(n-m)-x+\gamma_j\big),
            \\
            V_{n-m}&=\frac{N_r-x+\gamma_r+\beta}{(\alpha+n-m)(\beta+n-m)B} \prod_{j=1}^{r}(x-\gamma_j).
        \end{aligned}
    \]
    Vieta's theorem then implies
    \[
        \begin{aligned}
            (\alpha+n-m)(\beta+n-m)\cdot{[x^{r}]}U_{n-m}
            &=(-1)^{r-1}\left(\sum_{j=1}^r N_j + \sum_{j=1}^r\gamma_j + \gamma_r - (r+1)(n-m)
                -\alpha \right)
            ,\\
            (\alpha+n-m)(\beta+n-m)\cdot[x^{r}]V_{n-m}
            &=\frac 1B \left(\sum_{j=1}^r\gamma_j + \gamma_r +N_r+\beta \right)
            =(-1)^{r-1}\left(\sum_{j=1}^r\gamma_j + \gamma_r +N_r+\beta \right).
        \end{aligned}
    \]
    From~\eqref{eq:md_pm_mpm1} and Lemma~\ref{lemma:diff_poly}, we see that~$P_n^{(m)}$ has
    degree~$rm$ and its leading coefficient satisfies the equality:
    \[
        \begin{aligned}
        (-1)^{r}\,[x^{rm}]\, P_m^{(n)} (x)
        &=\left(
            \alpha + \beta + (r+1)(n-m) -\sum_{j=1}^{r-1} N_j
            +r(m-1)
        \right)\frac{[x^{r(m-1)}]\, P_{m-1}^{(n)} (x)}{(\alpha+n-m)(\beta+n-m)}
        % \\
        % &=\left(
        %     \alpha + \beta + (r+1)n - m -r -\sum_{j=1}^{r-1} N_j
        % \right)[x^{r(m-1)}]\, P_{m-1}^{(n)} (x)       
        \\
        &=\left(
            \alpha + \beta + r(n-1) + (n - m) -\sum_{j=1}^{r-1} N_j
        \right)\frac{[x^{r(m-1)}]\, P_{m-1}^{(n)} (x)}{(\alpha+n-m)(\beta+n-m)}
        ;
        \end{aligned}
    \]
    it does not vanish when Condition~MD2 is satisfied.

    To verify the orthogonality conditions, it is enough to use the precisely the same approach
    as for~$r=1$. The conditions for the shifts~$\gamma_j$ in Theorem~\ref{th:main} guarantee
    that~$R_{n-m+1}(-1+\gamma_j)=R_{n-m+1}(N+\gamma_j)=0$ with~$m\in \{1,\dots,n\}$: the
    denominators of~$R_{n-m+1}$ have at these points poles, while their numerators are regular.
    As a result, the term with the values on the boundary is equal to zero on the inductive
    step, see~\eqref{eq:check_orthog}.
\end{proof}

For particular cases of discrete multiple orthogonal polynomials, the conditions of
Theorem~\ref{th:main} on the shifts~$\gamma_j$ and the parameters~$\alpha_j, \beta_j$ may be
somewhat relaxed. The next section is devoted to particular examples corresponding to~$r=2$
weights.

\section{Classification of polynomials of multiple discrete orthogonality}
\label{sect:classification}

Further we restrict ourselves to the case~$r=2$. Our aim is to study the polynomials~$P_n$ of
degree~$2n$ orthogonal with respect to the pair of discrete measures~$\mu_1$ and~$\mu_2$:
\begin{equation}\label{eq:orth_mdiscr2}
    \int P_n(x)  x^k \, d\mu_j(x)=0,
    \quad
    k=0,\dots,n-1,
    \quad
    j=1,2.
\end{equation}
In particular, we discuss the conclusions from Theorem~\ref{th:main} when the measures~$\mu_1$
and~$\mu_2$ are positive. Depending on whether the convex hulls
of~$\supp\mu_1=\gamma_1+S_{\rho^{(1)}}$ and~$\supp\mu_2=\gamma_2+S_{\rho^{(2)}}$ are disjoint or
not, we point out two cases:
\begin{itemize}
\item Case~I: $\operatorname{conv}
    (\gamma_1+S_{\rho^{(1)}}) \cap \operatorname{conv}(\gamma_2+S_{\rho^{(2)}}) \ne \varnothing$;
\item Case~II: $\operatorname{conv}
    (\gamma_1+S_{\rho^{(1)}}) \cap \operatorname{conv}(\gamma_2+S_{\rho^{(2)}}) = \varnothing$.
\end{itemize}

\subsection{Case I}

\subsubsection*{Charlier-Charlier polynomials}
Let~$B\coloneqq b>0$, and let~$\gamma_1,\gamma_2$ be real numbers such
that~$0<|\gamma_1-\gamma_2|<1$. Then the function
\[
    R(x)\coloneqq\frac{b^x}{\Gamma(x-\gamma_1+1)\Gamma(x-\gamma_2+1)}
\]
is a product of two Charlier weights on the lattices shifted by~$\gamma_1$ and~$\gamma_2$. It is
positive for~$x-\gamma_j\in\mathbb Z_+$, ~$j=1,2$. Determine a couple of discrete measures:
\begin{equation}\label{eq:chch_measures}
    \mu_j(y)\coloneqq \sum_{x\in\gamma_j+\mathbb Z_+} R(x)\delta(y-x),
    \quad
    j=1,2.
\end{equation}

\begin{figure}[ht]
    \caption{Zeros of the Charlier-Charlier polynomials when $b=1$, $\gamma_1=0$, $\gamma_2=\frac 12$.}%
    \label{fig:3}
    \includegraphics[scale=.8]{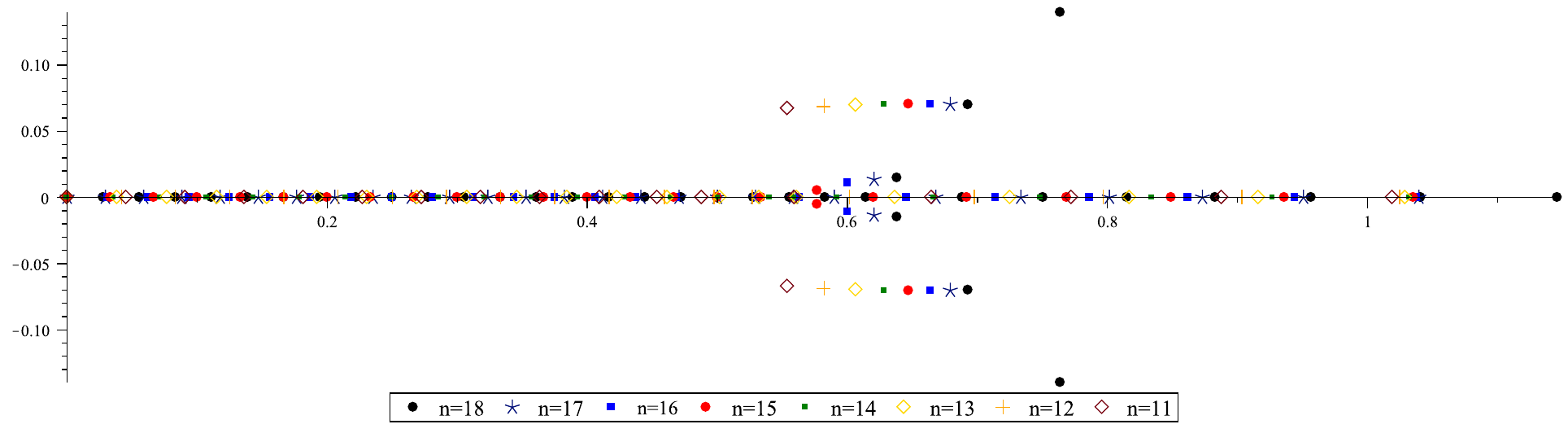}%[trim=4em 6em 5.8em 8em, clip=true]
\end{figure}

Then the \emph{Charlier-Charlier polynomial}~$P_n$ of degree~$2n$ is determined through the formula:
\[
    P_n(x) R(x)=\nabla^n R(x).
\]
This polynomial satisfies~$2n$ orthogonality conditions~\eqref{eq:orth_mdiscr2} with respect to
the measures~\eqref{eq:chch_measures}.

\subsubsection*{Charlier-Meixner polynomials}
Suppose that~$b>0$, ~$\alpha>0$ and~$\gamma_1,\gamma_2\in\mathbb R$ are such
that~$0<|\gamma_1-\gamma_2|<1$ and~$\gamma_1-\gamma_2<\alpha$. The following function is
positive for~$x\ge\min (\gamma_1,\gamma_2)$:
\[
    R^{(\alpha)}(x)=\frac{b^x \Gamma(x-\gamma_2+\alpha)}
    {\Gamma(x-\gamma_1+1)\Gamma(x-\gamma_2+1)}.
\]
On integer lattices with shifts, consider two discrete measures:
\begin{equation}\label{eq:chmx_measures}
    \mu_j(y)\coloneqq \sum_{x\in\gamma_j+\mathbb Z_+} R^{(\alpha)}(x)\delta(y-x),
    \quad
    j=1,2.
\end{equation}
Then the polynomial~$P_n$ determined by the Rodrigues formula
\[
    P_n(x) R^{(\alpha)}(x)=\nabla^n R^{(\alpha+n)}(x)
\]
has degree~$2n$ and satisfies the orthogonality relations~\eqref{eq:orth_mdiscr2} with respect
to the measures~\eqref{eq:chmx_measures}. Polynomials of this form we call the
\emph{Charlier-Meixner polynomials}.

\subsubsection*{Meixner-Sorokin polynomials}
Let the numbers~$b\in(0,1)$, ~$\alpha_1,\alpha_2>0$ and~$\gamma_1,\gamma_2\in\mathbb R$ satisfy
the inequalities~$0<|\gamma_1-\gamma_2|<1$ and~$-\alpha_2<\gamma_1-\gamma_2<\alpha_1$. Consider
the following function positive for~$x\ge\min (\gamma_1,\gamma_2)$:
\[
    R^{(\alpha_1,\alpha_2)}(x)\coloneqq b^x\frac{\Gamma(x-\gamma_1+\alpha_1)\Gamma(x-\gamma_2+\alpha_2)}
    {\Gamma(x-\gamma_1+1)\Gamma(x-\gamma_2+1)}.
\]
The function~$R^{(\alpha_1,\alpha_2)}$ is a product of two Meixner weights.

On each of the lattices~$\gamma_j+\mathbb Z_+$ we introduce the measure~$\mu_j$:
\begin{equation}\label{eq:mxsr_measures}
    \mu_j(y)\coloneqq \sum_{x\in\gamma_j+\mathbb Z_+} R^{(\alpha_1,\alpha_2)}(x)\delta(y-x),
    \quad
    j=1,2.
\end{equation}
Then the polynomial~$P_n$ of degree~$2n$ given by:
\[
    P_n(x) R^{(\alpha_1,\alpha_2)}(x)=\nabla^n R^{(\alpha_1+n,\alpha_2+n)}(x)
\]
satisfies the orthogonality conditions~\eqref{eq:orth_mdiscr2}, where the measures are
introduced in~\eqref{eq:mxsr_measures}. This kind of polynomials first appeared in the
paper~\cite{Sor2010}, so we call them the \emph{Meixner-Sorokin polynomials}.

\subsubsection*{Kravchuk-Kravchuk polynomials}
Let the parameters~$b>0$, ~$N_1,N_2\in\mathbb N$ and~$\gamma_1,\gamma_2\in\mathbb R$ satisfy one
of two inequalities:
\begin{itemize}
\item $N_1=N_2$, ~$0<|\gamma_1-\gamma_2|<1$;
\item $N_1=N_2-1$, ~$0<\gamma_1-\gamma_2<1$.
\end{itemize}
Note that, as above, these inequalities mean that the nodes of these lattices are interlacing:
between each two nodes of the first lattice there is precisely one node of the other lattice and
vice versa. In this case, the lattices are~$\gamma_j+\{0,1,\dots,N_j\}$. Consider the product of
two Kravchuk's weights:
\[
    R_{N_1,N_2}(x)\coloneqq\frac{b^x}
    {\Gamma(x-\gamma_1+1)\Gamma(N_1-x+\gamma_1+1)}
    \cdot \frac{1}
    {\Gamma(x-\gamma_2+1)\Gamma(N_2-x+\gamma_2+1)},
\]
and the pair of measures:
\begin{equation}\label{eq:krkr_measures}
    \mu_j(y)\coloneqq \sum_{x-\gamma_j=0}^{N_j} R_{N_1,N_2}(x)\delta(y-x),
    \quad
    j=1,2.
\end{equation}

For~$n\in\mathbb Z_+$, ~$n\le N_1$, determine the polynomial~$P_n$ of degree~$2n$ by:
\[
    P_n(x) R_{N_1,N_2}(x)=\nabla^n R_{N_1-n,N_2-n}(x).
\]

Then~$P_n$ satisfies the orthogonality conditions~\eqref{eq:orth_mdiscr2} with respect to the
measures~$\mu_1,\mu_2$ from~\eqref{eq:krkr_measures}. We will call~$P_n$ the
\emph{Kravchuk-Kravchuk polynomial}.

\begin{figure}[ht]
    \caption{Zeros of the Kravchuk-Kravchuk polynomials of index~$n>10$ (on the left)
        and~$n\le10$ (on the right) for the parameters
        \[b=2,\quad N_1=N_2=20,\quad \gamma_1=0,\quad \gamma_2=\frac 12.\]}%
    \label{fig:4}
    \includegraphics[height=8cm]{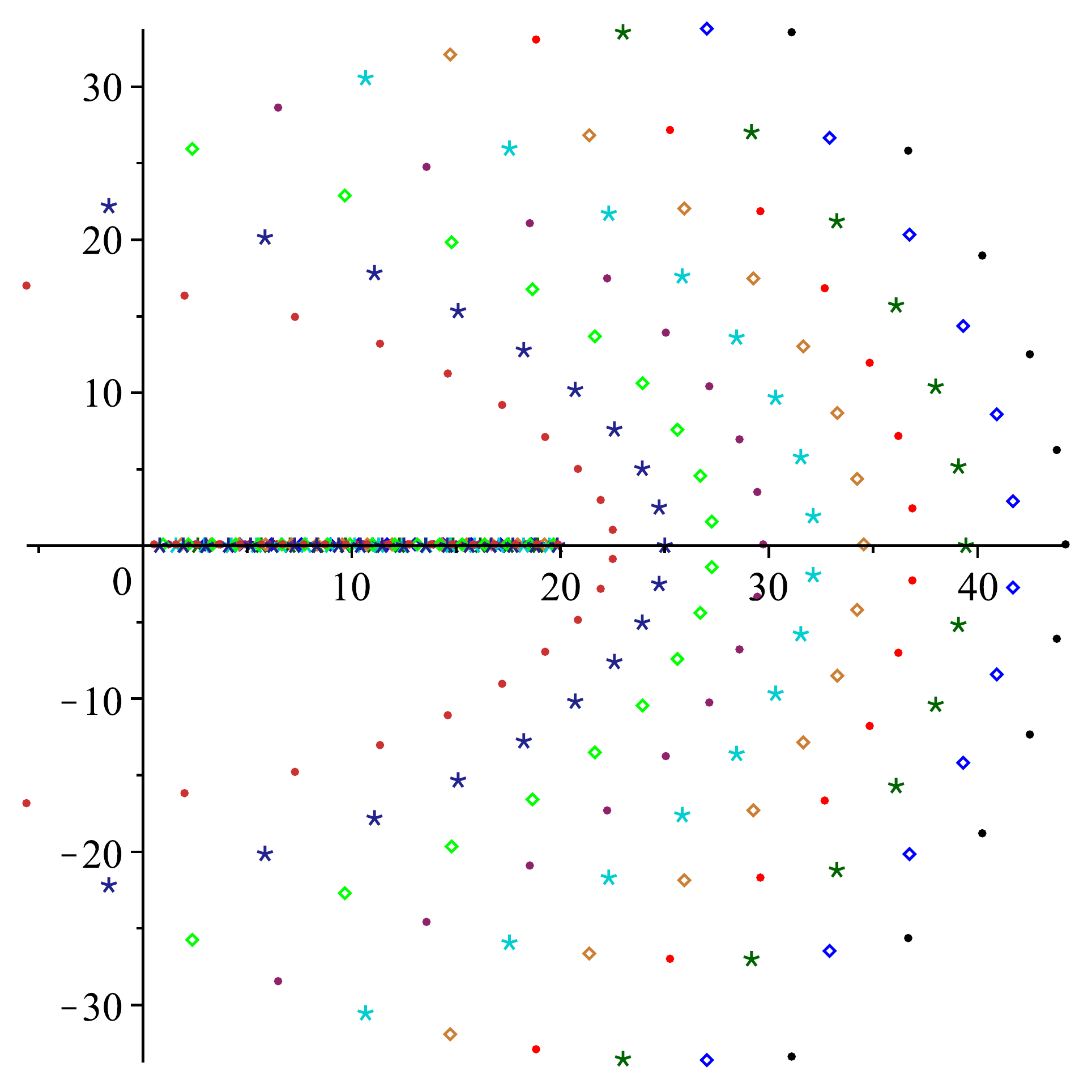}%[trim=4em 6em 5.8em 8em, clip=true]
    \includegraphics[height=8cm]{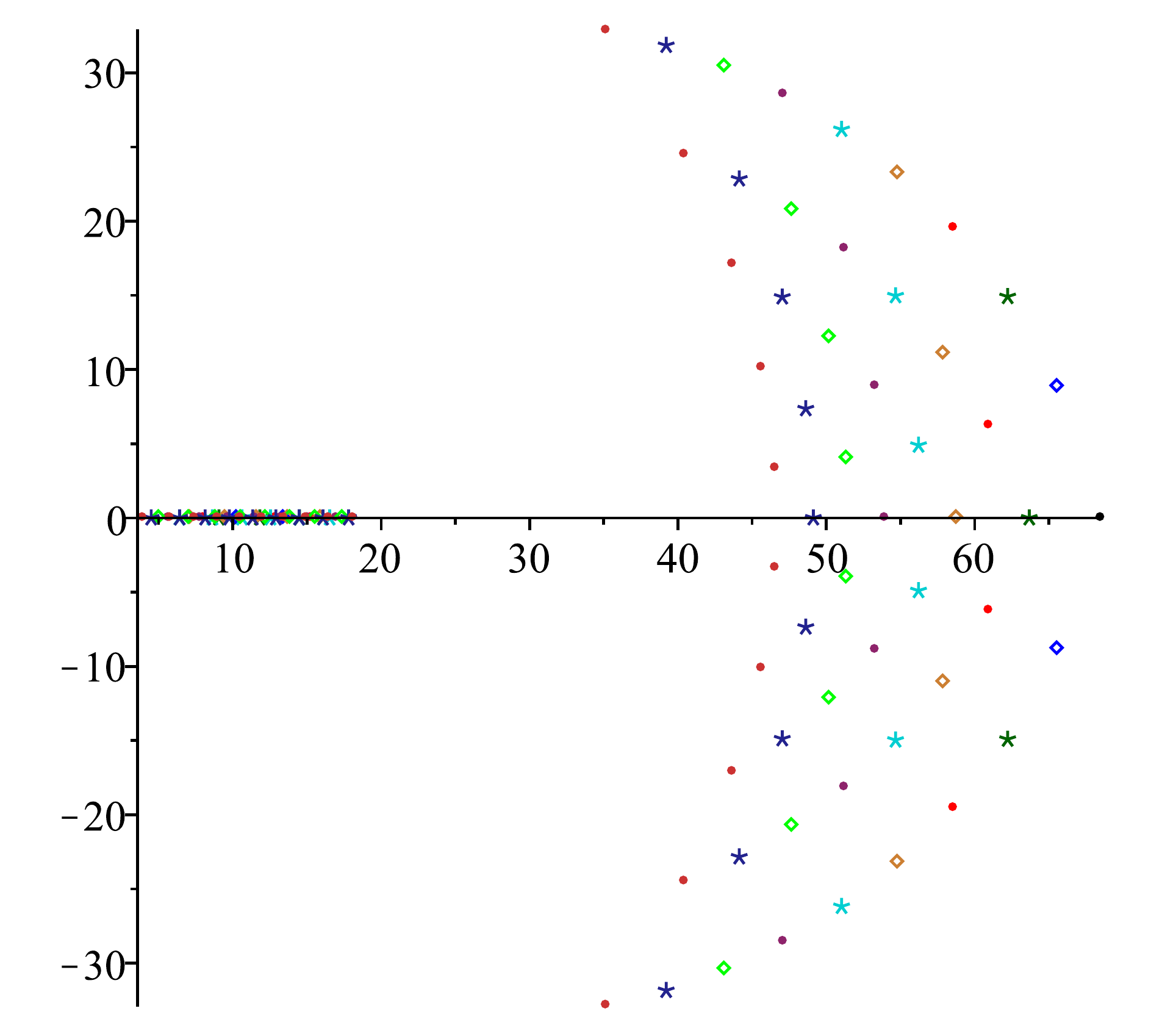}%[trim=4em 6em 5.8em 8em, clip=true]
\end{figure}

\subsubsection*{Kravchuk-Hahn I polynomials}
Fix~$B\coloneqq -1$
and let the parameters~$N_1,N_2\in\mathbb N$
and~$\alpha$, $\beta$, $\gamma_1$, $\gamma_2\in\mathbb R$ satisfy one of the following six conditions:
\begin{enumerate}[(a)]\itemsep7pt
\item\label{item:hk_a}
    $N_1=N_2$, ~$0<|\gamma_1-\gamma_2|<1$, ~$\alpha > \max\{0,\gamma_2-\gamma_1\}<-\beta-N_2$,
    and~$\beta-\gamma_1+\gamma_2,\beta \notin-\mathbb{Z}_+$;
\item $N_1=N_2$, ~$0<|\gamma_1-\gamma_2|<1$, ~$\beta > \max\{0,\gamma_1-\gamma_2\}<-\alpha-N_2$,
    and~$\alpha+\gamma_1-\gamma_2,\alpha \notin-\mathbb{Z}_+$;
\item $N_1=N_2-1$, $0<\gamma_1-\gamma_2<1$, ~$\alpha > 0<-\beta-N_2$,
    and~$\beta-\gamma_1+\gamma_2,\beta \notin-\mathbb{Z}_+$;
\item $N_1=N_2-1$, $0<\gamma_1-\gamma_2<1$, ~$\beta > 0<-\alpha-N_2$,
    and~$\alpha+\gamma_1-\gamma_2,\alpha \notin-\mathbb{Z}_+$;
\item $N_1=N_2+1$, $0<\gamma_2-\gamma_1<1$, ~$\alpha > \gamma_2-\gamma_1 <-\beta-N_2$,
    and~$\beta-\gamma_1+\gamma_2,\beta \notin-\mathbb{Z}_+$;
\item\label{item:hk_f}
    $N_1=N_2+1$, $0<\gamma_2-\gamma_1<1$, ~$\beta > 1-(\gamma_2-\gamma_1) <-\alpha-N_2$,
    and~$\alpha+\gamma_1-\gamma_2,\alpha \notin-\mathbb{Z}_+$.
\end{enumerate}

Consider the function
\[
    G_{N_1,N_2}^{(\alpha,\beta)}(x)
    \coloneqq
    \frac{1}
    {\Gamma(x-\gamma_1+1)\Gamma(N_1-x+\gamma_1+1)}
    \cdot \frac{(\alpha)_{x-\gamma_2}(\beta)_{N_2-x+\gamma_2}}
    {\Gamma(x-\gamma_2+1)\Gamma(N_2-x+\gamma_2+1)}.
\]
Each of the conditions~(\ref{item:hk_a}--\ref{item:hk_f}) implies that,
for~$x\in [\gamma_j,\gamma_j+N_j]$, the denominator of~$G_{N_1,N_2}^{(\alpha,\beta)}$ is
positive, while the sign of the numerator alternates so that the
expression~$(-1)^{x-\gamma_j} G_{N_1,N_2}^{(\alpha,\beta)}(x)$ keeps its sign on the
lattice~$\gamma_j + \{0,1,\dots,N_j\}$, where~$j=1,2$. Hence,
\begin{equation}\label{eq:krHn_measures}
    \mu_j(y)\coloneqq \sum_{x-\gamma_j=0}^{N_j} (-1)^{x-\gamma_j} G_{N_1,N_2}^{(\alpha,\beta)}(x)\delta(y-x),
    \quad
    j=1,2,
\end{equation}
are two discrete measures of constant sign. Given~$n\in\mathbb{Z}_+$ such
that~$n\le N\coloneqq\min\{N_1,N_2\}$, let us call the polynomial~$P_n$ satisfying the relation
\begin{equation}\label{eq:krHn_Rodrigues}
    P_n(x) (-1)^x G_{N_1,N_2}^{(\alpha,\beta)}(x)=\nabla^n \left[ (-1)^x
        G_{N_1-n,N_2-n}^{(\alpha+n,\beta+n)}(x)\right]
\end{equation}
the \emph{Kravchuk-Hahn I polynomial}.
\begin{figure}[ht]
    \caption{Zeros of the Kravchuk-Hahn I polynomials of index~$n\le 10$ (on the
        top),~$10\le n\le 16$ (in the middle) and~$n> 17$ (in the bottom) for the parameters
        \[\alpha=1,\quad \beta=-\frac{92}3,\quad N_1=N_2=20,\quad \gamma_1=-\frac 13,
            \quad\gamma_2=0.\]}
    \label{fig:KHI}
    \includegraphics[width=15cm]{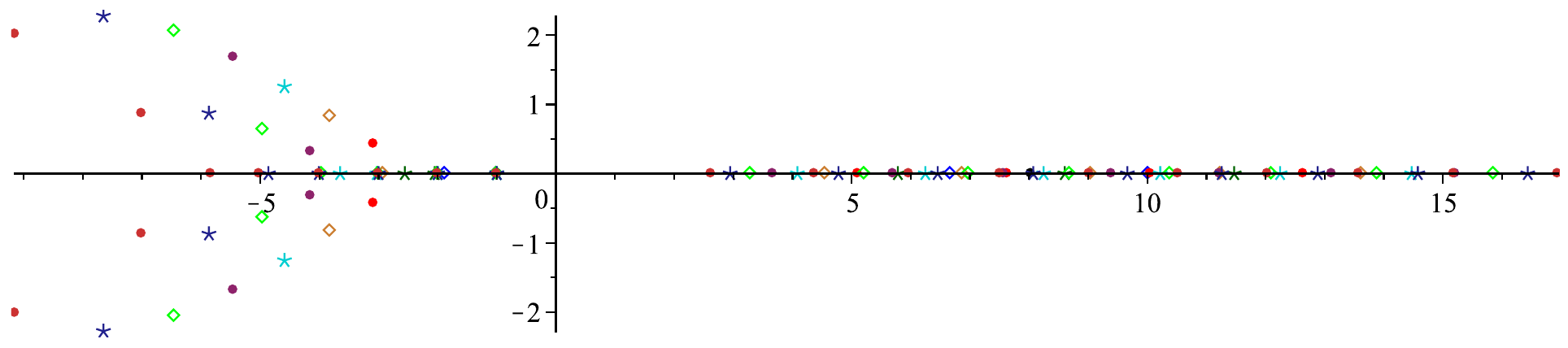}\\
    \includegraphics[width=15cm]{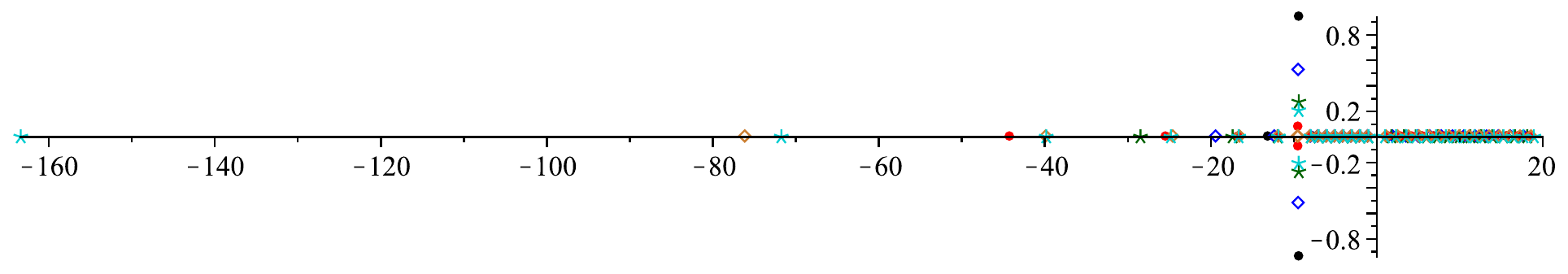}\\
    \includegraphics[width=15cm]{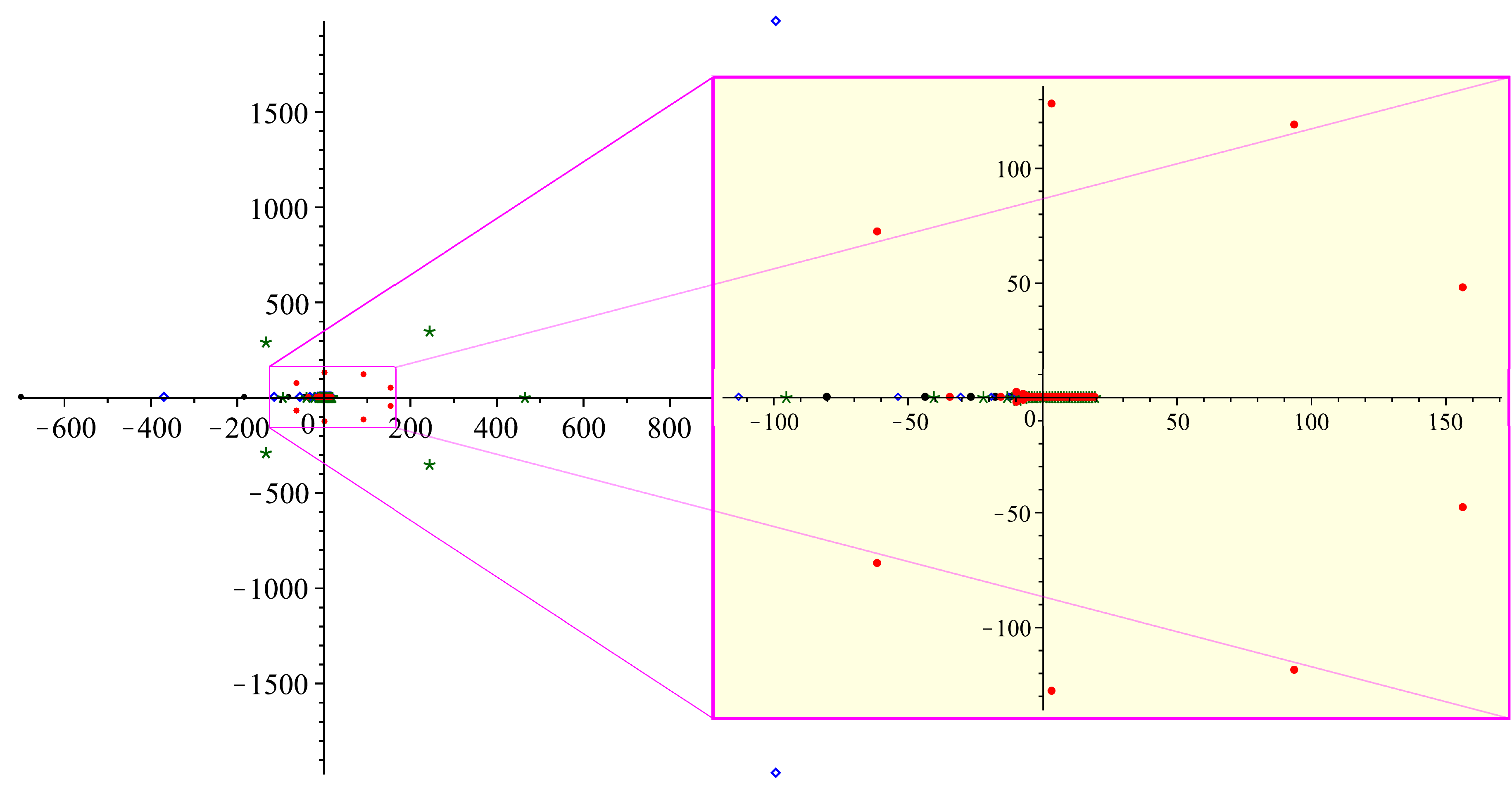}
\end{figure}
For each~$0\le n\le N$, it has degree~$2n$ and satisfies
the orthogonality conditions~\eqref{eq:orth_mdiscr2} with respect to the measures~$\mu_1,\mu_2$
from~\eqref{eq:krHn_measures} --- provided that Condition~MD2 is satisfied. Here this condition
is nontrivial:
% , it reduces to the following
% \[
%     \alpha + \beta + 2n - N_1\notin\{0,-1,\dots,-n\}
%     \quad\text{for all}\quad
%     n\in\big\{0,1,\dots,N-1\big\}.
% \]
% The latter may be rewritten as
\begin{equation}\label{eq:krHn_MD2}
    \alpha+\beta \notin \{N_1-3N+3,\ N_1-3N+4,\dots,\ N_1-3,\ N_1-2,\ N_1\}
    .
\end{equation}
See Figure~\ref{fig:KHI} for the example how zeros of the Kravchuk-Hahn I polynomials are
located on the complex plane.

We considered the case~I, when the convex hulls of supports of the measures~$\mu_1,\mu_2$ have
common points. More specifically, the points of~$\supp\mu_1$ and~$\supp \mu_2$ were interlacing.
It is easy to check that the interlacing property in case~I is necessary for the measures to be
of constant signs.

\subsection{Case II}
Now let us switch to the case when the convex hulls of supports of the measures have no common
points.
\subsubsection*{Charlier-Kravchuk polynomials}
Suppose that~$B\coloneqq -b$, ~$b>0$, ~$N\in\mathbb{N}$, and~$\gamma_1,\gamma_2\in\mathbb R$ are
such that~$\gamma_1-\gamma_2>N$ and~$\gamma_1-\gamma_2\notin\mathbb N$. Denote
\[
    G_N(x)\coloneqq \frac{1}
    {\Gamma(x-\gamma_1+1)\Gamma(x-\gamma_2+1)\Gamma(N-x+\gamma_2+1)}
\]
and consider two discrete measures with constant signs:
\begin{equation}\label{eq:chkr_measures}
    \begin{aligned}
        \mu_1(y)&\coloneqq \sum_{x-\gamma_1=0}^\infty (-b)^{x-\gamma_1} G_N(x)\delta(y-x),
        \\
        \mu_2(y)&\coloneqq \sum_{x-\gamma_2=0}^N (-b)^{x-\gamma_2} G_N(x)\delta(y-x).
    \end{aligned}
\end{equation}
Observe that the inequality~$\gamma_1-\gamma_2>N$ guarantees that convex hulls of the supports
of the measures~$\mu_1$ and~$\mu_2$ are disjoint.

For~$n\in\mathbb Z_+$ such that~$n\le N$, determine the polynomial~$P_n$ of degree~$2n$
by the formula
\[
    P_n(x) (-b)^x G_N(x)=\nabla^n \left[ (-b)^x G_N(x)\right].
\]
Then the orthogonality relations~\eqref{eq:orth_mdiscr2} are satisfied by~$P_n$ with the
measures~\eqref{eq:chkr_measures}. We call~$P_n$ the \emph{Charlier-Kravchuk polynomial}.

\subsubsection*{Meixner-Kravchuk polynomials}
This family is constructed analogously to the Charlier-Kravchuk case. Suppose that~$b\in(0,1)$,
~$N\in\mathbb{N}$, and~$\alpha,\gamma_1,\gamma_2\in\mathbb R$ are such that~$\alpha>0$
and~$\gamma_1-\gamma_2>N$, ~$\gamma_1-\gamma_2\notin\mathbb{N}$. Put
\[
    G_N^{(\alpha )}(x)\coloneqq \frac{\Gamma(x-\gamma_1+\alpha)}
    {\Gamma(x-\gamma_1+1) \Gamma(x-\gamma_2+1) \Gamma(N-x+\gamma_2+1)}
\]
and consider two discrete measures with constant signs:
\begin{equation}\label{eq:mxkr_measures}
    \begin{aligned}
        \mu_1(y)&\coloneqq \sum_{x-\gamma_1=0}^\infty (-b)^{x-\gamma_1} G_N^{(\alpha )}(x)\delta(y-x),
        \\
        \mu_2(y)&\coloneqq \sum_{x-\gamma_2=0}^N (-b)^{x-\gamma_2} G_N^{(\alpha )}(x)\delta(y-x).
    \end{aligned}
\end{equation}
The convex hulls of~$\supp\mu_1$ and~$\supp\mu_2$ have no common points.

As above for~$n\in\mathbb Z_+$, ~$n\le N$, determine the polynomial~$P_n$ of degree~$2n$:
\[
    P_n(x) (-b)^x G_N^{(\alpha )}(x)=\nabla^n \left[ (-b)^x G_{N-n}^{(\alpha +n)}(x)\right].
\]
The \emph{Meixner-Kravchuk polynomial}~$P_n$ is orthogonal~\eqref{eq:orth_mdiscr2} with respect
to the measures~\eqref{eq:mxkr_measures}.

\subsubsection*{Angelesco-Kravchuk polynomials}
Let the parameters~$b>0$, ~$N_1,N_2\in\mathbb{N}$, and~$\gamma_1,\gamma_2\in\mathbb R$ be such
that~$\gamma_1-\gamma_2>N_2$ and~$\gamma_1-\gamma_2\notin\mathbb{N}$. Denote
\[
    G_{N_1,N_2}(x)
    \coloneqq \frac{1}{\Gamma(x-\gamma_1+1) \Gamma(N-x+\gamma_1+1)}
    \cdot \frac{1}{\Gamma(x-\gamma_2+1) \Gamma(N-x+\gamma_2+1)}.
\]
Then the discrete measures 
\begin{equation}\label{eq:angkr_measures}
    \mu_j(y)\coloneqq \sum_{x-\gamma_j=0}^{N_j} (-b)^{x-\gamma_j} G_{N_1,N_2}(x)\delta(y-x),
    \quad
    j=1,2,
\end{equation}
are of constant signs. Given~$n\in\mathbb Z_+$, ~$n\le\min\{N_1,N_2\}$, determine the
polynomial~$P_n$ of degree~$2n$ from the formula:
\[
    P_n(x) (-b)^x G_{N_1,N_2}(x)=\nabla^n \left[ (-b)^x G_{N_1-n,N_2-n}(x)\right].
\]
This polynomial satisfies the orthogonality conditions~\eqref{eq:orth_mdiscr2} with respect to
the measures from~\eqref{eq:angkr_measures}. In literature, such families of polynomials are
usually called the Angelesco systems, see~\cite{Angelesco,AptStahl,Kaljagin}. So, we call~$P_n$
the \emph{Angelesco-Kravchuk polynomial}.
% \begin{figure}[ht]\label{fig:4}
%     \includegraphics[width=17cm]{a-k}
%     \caption{All zeros of the Angelesco-Kravchuk polynomials are real. This picture shows the
%         zeros of polynomials for~$n\le10$ (the upper line)
%         and of those for~$n>10$ (the lower line) when the parameters are:
%         \[b=2,\quad N_1=N_2=20,\quad \gamma_1=20,\quad \gamma_2=-\frac 12,\]
%         and therefore~$S_{\rho_1}=\{20,21,\dots,40\}$
%         and~$S_{\rho_2}=\{-0.5,0.5,1.5,\dots,19.5\}$.}
% \end{figure}
\begin{figure}[ht]
    \caption{All zeros of the Angelesco-Kravchuk polynomials are real. This picture shows the
        zeros of polynomials on the real line for each~$n$ when the parameters are:
        \[b=2,\quad N_1=N_2=20,\quad \gamma_1=20,\quad \gamma_2=-\frac 12,\]
        and therefore~$S_{\rho_1}=\{20,21,\dots,40\}$
        and~$S_{\rho_2}=\{-0.5,0.5,1.5,\dots,19.5\}$.}%
    \label{fig:6}    
    \includegraphics[width=16cm]{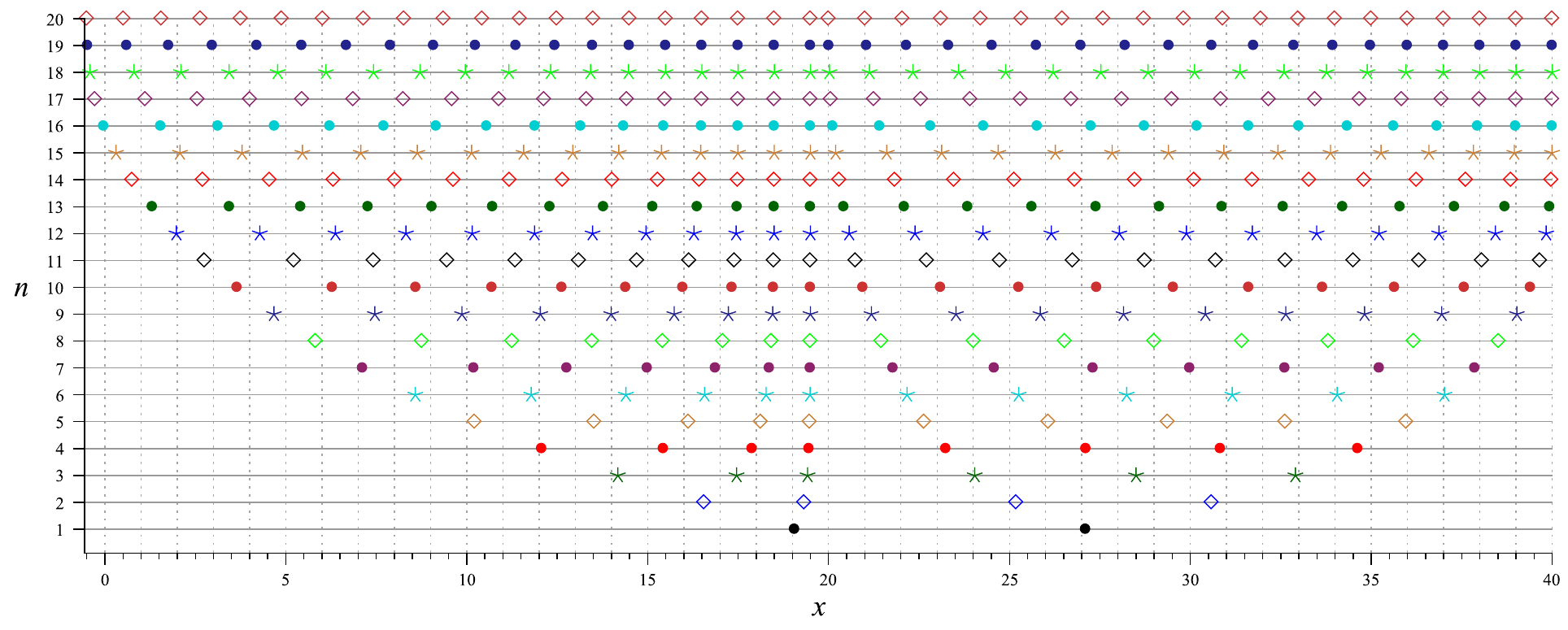}
\end{figure}

\subsubsection*{Kravchuk-Hahn II polynomials}
Now, fix~$B\coloneqq -1$ and let the parameters~$N_1,N_2\in\mathbb N$
and~$\alpha,\beta,\gamma_1,\gamma_2\in\mathbb R$ satisfy one of the following two conditions:
\begin{enumerate}[(a)]\itemsep7pt
\item\label{item:hk2_a}
    $\alpha > \gamma_2-\gamma_1>N_1$, ~$\gamma_2-\gamma_1\notin \mathbb{N}$, ~$\beta>0$;
\item\label{item:hk2_b}
    $-\beta-N_2 > \gamma_2-\gamma_1>N_1$, ~$\gamma_2-\gamma_1\notin \mathbb{N}$, ~$-\alpha-N_2>0$,\\
    $\alpha+\gamma_1-\gamma_2, \notin-\mathbb{Z}_+$,
    ~$\beta-\gamma_1+\gamma_2\notin-\mathbb{Z}_+$, and~$\alpha,\beta \notin-\mathbb{Z}_+$.
\end{enumerate}
Under these conditions, the measures~$\mu_1,\mu_2$ defined in~\eqref{eq:krHn_measures} keep
their signs. Moreover, the convex hulls of~$\supp\mu_1$ and~$\supp\mu_2$ are disjoint.

Given~$n\in\mathbb Z_+$, ~$n\le\min\{N_1,N_2\}$, determine the
\emph{Kravchuk-Hahn II polynomial} as the polynomial~$P_n$ satisfying the
formula~\eqref{eq:krHn_Rodrigues}. The polynomial~$P_n$ has degree~$2n$, it is
orthogonal~\eqref{eq:orth_mdiscr2} with respect to the measures from~\eqref{eq:krHn_measures}.
Here each of~\eqref{item:hk2_a} and~\eqref{item:hk2_b} implies Condition~MD2.

\begin{figure}[ht]
    \caption{All zeros of the Kravchuk-Hahn II polynomials are real. This picture shows the
        zeros of polynomials on the real line for each~$n$ when
        \[\alpha=21,\quad\beta=\frac 43,\quad N_1=N_2=20,\quad \gamma_1=20,\quad \gamma_2=-\frac 13.\]}%
    \label{fig:7}
    \includegraphics[width=16cm]{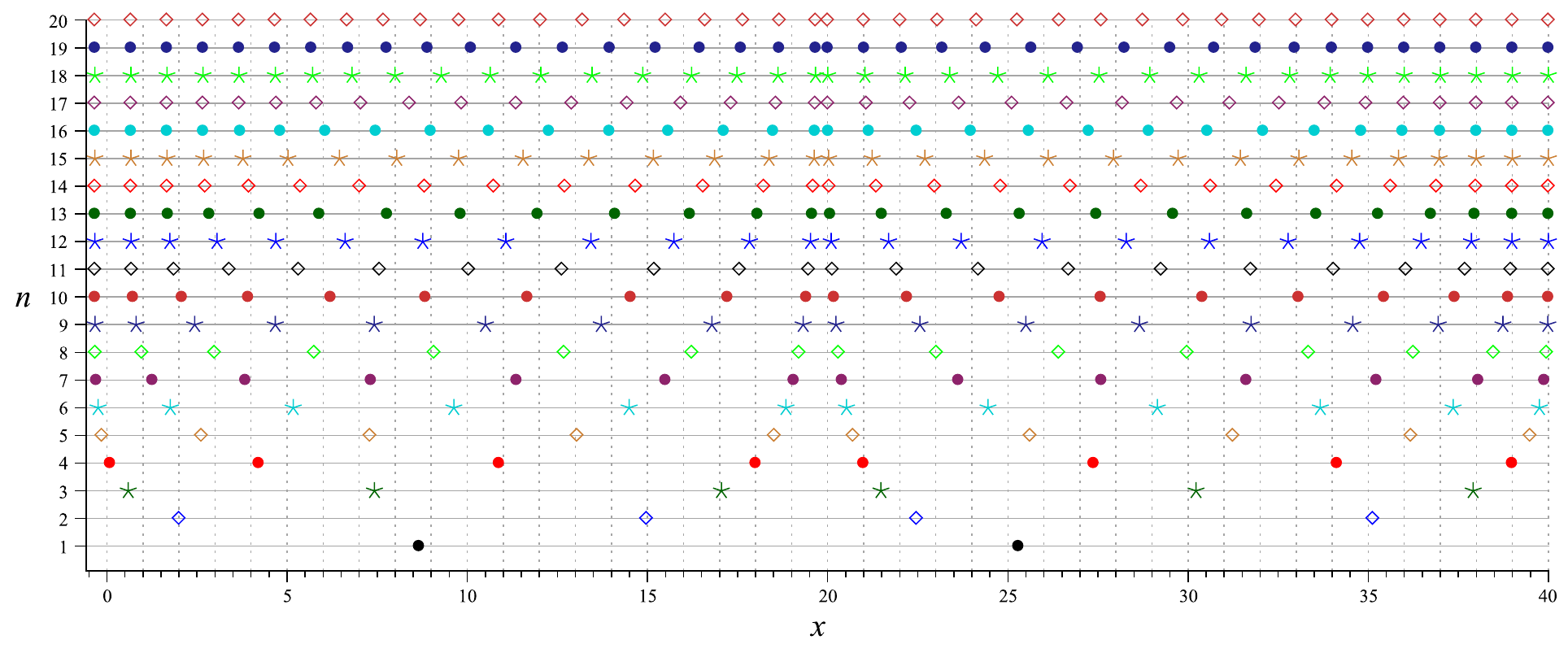}
\end{figure}

\section{Conclusion}

We considered several examples of polynomials of multiple discrete orthogonality for the case
of~$r=2$ weights. For each of the examples, the weight functions satisfy the difference Pearson
equation, and the related polynomials may be found from the Rodrigues
formula~\eqref{eq:md_Rodrigues}.

Clear, that the Rodrigues formula implies recurrence relations of the third order which relate
the polynomials~$P_n$ of degree~$2n$ corresponding to the diagonal indices~$(n,n)$ with the
polynomials~$Q_n$ of degree~$2n+1$ corresponding to the indices~$(n+1,n)$. (The
polynomials~$Q_n$ satisfy~$n+1$ orthogonality conditions with respect to the measure~$\mu_1$
and~$n$ conditions with respect to~$\mu_2$.) It seems interesting to study the properties of
these recurrence relations, as well as the corresponding band Hessenberg %Heisenberg
operators.

The present work does not touch upon the problems of \emph{normality} of indices, as well as the
uniqueness of multiple orthogonal polynomials~$P_n$. For the case~II, when the convex hulls of
the supports of the measures~$\mu_1,\mu_2$ are disjoint, the system of the
measures~$\mu_1,\mu_2$ is \emph{perfect}. It means that all indices~$(n_1,n_2)$ are normal, and
the corresponding multiple orthogonal polynomial is determined uniquely up to a normalizing
constant. Indeed, the orthogonality relations yield that~$P_{n_1,n_2}$ has~$n_j$ simple zeros in
a convex hull of the support of~$\mu_j$, see Figures~\ref{fig:6}--\ref{fig:7}. That is, the
degree of~$P_{n_1,n_2}$ is equal to~$n_1+n_2$, which gives the normality of~$(n_1,n_2)$. For the
case~I, the question of normality requires further study.

Classical orthogonal polynomials are limits of classical polynomials of discrete orthogonality.
Various limiting relations between hypergeometric orthogonal polynomials are represented in the
Askey scheme~\cite{KoeSw98}. The multiple orthogonal polynomials considered here possess
analogous properties.

Among interesting open questions is the asymptotic behaviour of~$P_n$ as~$n\to\infty$. The
paper~\cite{Sor2010} studied the asymptotics of the scaled Meixner-Sorokin polynomials (case I).
It gave rise to new non-trivial equilibrium problems for the vector logarithmic potential. Part
of zeros of~$P_n$ sweep out curves in the complex plane. The case~II with appropriate scaling is
awaited to yield standard equilibrium problems with the Angelesco interaction matrix and an
upper constraint for the equilibrium measure.

We plan to consider the highlighted problems in further publications. We also hope that the
polynomials introduced here will find their applications in combinatorics, representation theory
and other areas.

\end{document}